\newtheorem{theorem}{Theorem}[section]
\newtheorem{lemma}{Lemma}[section]
\newtheorem{proposition}{Proposition}[section]
\theoremstyle{definition}
\newtheorem{definition}{Definition}[section]
\newtheorem{example}{Example}[section]
\newtheorem{remark}{Remark}[section]
\numberwithin{equation}{section}
\newtheorem*{assumption*}{\assumptionnumber}
\providecommand{\assumptionnumber}{}
\newenvironment{assumption}[2]
{%
	\renewcommand{\assumptionnumber}{Assumption #1#2}%
	\begin{assumption*}%
		\protected@edef\@currentlabel{#1#2}%
	}
	{%
	\end{assumption*}
}
\begin{document}
\setcounter{page}{1}

\vspace*{1.0cm}
\title[Prox-Regular Perturbed Sweeping Processes]
{On the Solution Existence for Prox-Regular Perturbed Sweeping Processes}
\author[Nguyen Khoa Son, Nguyen Nang Thieu, Nguyen Dong Yen]{Nguyen Khoa Son$^{1}$, Nguyen Nang Thieu$^{2,1}$, Nguyen Dong Yen$^{1,*}$}
\maketitle
\vspace*{-0.6cm}

\begin{center}
{\footnotesize {\it

$^1$Institute of Mathematics, Vietnam Academy of Science and Technology, Hanoi, Vietnam\\
$^2$XLIM UMR-CNRS 7252, Universit\'e de Limoges, Limoges, France

}}\end{center}

\vskip 4mm {\small\noindent {\bf Abstract.}
In the setting adopted by Edmond and Thibault [Mathematical Programming 104 (2005), 347--373], we study a class of perturbed sweeping processes. Under suitable assumptions, we obtain two solution existence theorems for perturbed sweeping processes with the constraint sets being prox-regular sublevel sets. The results are applied to analyzing the behavior of some concrete mechanical sweeping processes, which appear for the first time in this paper.

\noindent {\bf Keywords.}
Sweeping process; Mechanical sweeping process; Perturbed; Prox-regularity; Sublevel set; Initial value; Terminal value. }

\renewcommand{\thefootnote}{}
\footnotetext{$^*$Corresponding author.
\par
E-mail addresses: nkson@vast.ac.vn (Nguyen Khoa Son), nguyennangthieu@gmail.com (Nguyen Nang Thieu), ndyen@math.ac.vn (Nguyen Dong Yen).
\par
Received xxxxx; Accepted xxxx.}

\section{Introduction}

Let $T>0$ be a real number and let $C(t)$, $t\in [0,T]$, be nonempty closed subsets of a Hilbert space ${\mathcal H}$. For any fixed $x_0\in C(0)$, the differential inclusion
\begin{equation}\label{sweepingprocess}
\begin{cases}
-\dot{x}(t)\in {\mathcal{N}}_{C(t)}(x(t)) \quad \text{a.e.}\ t\in [0,T],\\
x(0)=x_0,
\end{cases}
\end{equation}
where ${\mathcal{N}}_{\Omega}(z)$ denotes the Clarke normal cone~\cite[p. 51]{Clarke} to a closed set $\Omega$ at $z$, is called a \textit{sweeping process}. An absolutely continuous function $x(\cdot):[0,T]\to {\mathcal H}$ which satisfies the two conditions in~\eqref{sweepingprocess} is said to be a \textit{solution} of the sweeping process. It is worthy to stress that any absolutely continuous function $x(\cdot):[0,T]\to {\mathcal H}$ is Fr\'echet differentiable almost everywhere on $[0,T]$ with respect to the Lebesgue measure (see Proposition~\ref{differentiablity} below). If $C(t)$ is convex, then the Clarke normal cone coincides with the normal cone in the sense of convex analysis~\cite[Proposition 2.4.4, p. 52]{IT}. 

The model~\eqref{sweepingprocess} under the assumption that $C(t)$ is \textit{convex} for each $t \in [0,T]$ was introduced by Moreau in~\cite{Moreau4}, where some fundamental results on solution existence and uniqueness were obtained. In~\cite{Moreau5}, he has studied the continuity of the solutions when the convex-valued mapping $C:[0,T]\rightrightarrows{\mathcal H}$ undergoes small perturbations.

In many subsequent papers, assumptions on the convexity of $C(t)$ have been relaxed. For examples, Colombo and Goncharov~\cite{ColomboGoncharov} obtained a solution existence and uniqueness theorem for the sweeping process~\eqref{sweepingprocess} under the hypothesis that the sets $C(t)$ are weakly closed and $\varphi$-convex. Later, in a more general setting, Bounkhel~\cite{Bounkhel2007} proved some solution existence and uniqueness results. Namely, the author just requires that the sets $C(t)$ are prox-regular (see the definition of prox-regularity of a set below).

Since the function $x(\cdot)$ in~\eqref{sweepingprocess} can be interpreted as the trajectory of a certain mechanical system, which is driven by an external force (the gravitational force, a force generated by an electromagnetic field, a wind, etc.), several authors have studied \textit{perturbed sweeping processes} of the form 
\begin{equation}\label{ettheorem}
\begin{cases}
-\dot{x}(t)\in {\mathcal{N}}_{C(t)}(x(t)) + g(t, x(t)) \quad \text{a.e.}\ t\in [0,T]\\
x(0)=x_0,
\end{cases}
\end{equation}
where the perturbation function $g$ is either a single-valued or a multi-valued map satisfying some regularity assumptions. Since ${\mathcal{N}}_{C(t)}(x(t))=\{0\}$ if $C(t)={\mathcal H}$ for all $t\in [0,T]$, then the inclusion in~\eqref{ettheorem} reduces to the ordinary differential equation $-\dot{x}(t)=g(t, x(t))$. Hence, in that case, \eqref{ettheorem} is a Cauchy problem. In the finite-dimensional setting, where ${\mathcal H}=\mathbb R^n$, there are two celebrated theorems: the Peano theorem~\cite[Theorem~2.1, p.~10]{hartman1964} (for the solution existence of the Cauchy problem) and the Picard-Lindel\"of theorem~\cite[Theorem~1.1, p.~8]{hartman1964} (for the existence and uniqueness of the solution of the Cauchy problem). Naturally, one wishes to have some analogues of such theorems for the problem~\eqref{ettheorem}.

Perturbed sweeping processes with the sets $C(t)$, $t\in [0,T]$, being convex or the complement of the interior of a convex set were studied by Castaing et al.~\cite{CastaingDucHaValadier} and several authors in references therein. For sweeping processes with delay, where $C(t)$, $t\in [0,T]$, are assumed to be compact convex sets, Castaing and Monteiro Marques~\cite{CastaingMonteiro} obtained not only solution existence and uniqueness results but also some topological properties of the solution sets.

Bounkhel and Thibault~\cite[Corollary~3.5]{BT2005} established new characterizations of $r$-prox-regular sets in terms of the subdifferentials of the distance functions associated with the sets. Using these characterizations, they proved~\cite[Theorem~4.2]{BT2005} a solution existence theorem for nonconvex sweeping processes in Hilbert spaces with multi-valued perturbation mappings.

For discontinuous perturbed sweeping processes in the infinite-dimensional setting, Edmond and Thibault~\cite[Theorem~3.1]{ET2006} sought solutions in the form of functions of bounded variation, which can be discontinuous. As a corollary, they gave~\cite[Theorem~5.1]{ET2006} sufficient condition for  the existence of absolutely continuous solutions.

The starting point for our investigations in the present note is the papers~\cite{ET2006,ET2005} of Edmond and Thibault, where the authors investigated systematically the solution existence and uniqueness for the sweeping processes with prox-regular constraint sets $C(t)$ with single-valued perturbations. 

Based on the result on the prox-regularity of nonsmooth sublevel sets of Adly et al.~\cite[Theorem~4.1]{ANT}, we prove the solution existence as well as the solution uniqueness for a special case when $C(t)$ are sublevel sets under some assumptions. To clarify the applicability of the obtained results, we give some examples having clear mechanical interpretations. Remarkably, the examples can be solved only by invoking the uniqueness of the solution of~\eqref{ettheorem}. 

The remainder of this note is organized as follows. Section 2 presents some notions and preliminary results to be used in the sequel. In Section 3, we establish two solution existence theorems for the case where $C(t)$, $t\in[0,T]$, are sublevel sets of a certain function. In Section 4, we present four illustrative examples. 

\section{Preliminaries}

Let ${\mathcal H}$ be a Hilbert space whose scalar product will be denoted by $\langle \cdot, \cdot\rangle$ and the associated norm by $\Vert \cdot \Vert$. Denoted by $\mathbb{B}$ the closed unit ball of ${\mathcal H}$. For any set $\Omega\subset{\mathcal H}$, the notation $d(\cdot,\Omega)$ denotes the distance from a point in ${\mathcal H}$ to $\Omega$, i.e., for some $x\in {\mathcal H}$, $d(x,\Omega)= \inf\limits_{y\in \Omega}\Vert x-y\Vert.$ For any extended real number $r\in (0,\infty]$, the $r$-enlargement of $\Omega$, denoted by $U_r(\Omega)$, is defined by $U_r(\Omega)=\{x\in{\mathcal H}\mid d(x,\Omega) <r\}$. The closure, the interior and boundary of a set $\Omega\subset {\mathcal H}$ are denoted respectively by ${\rm cl}(\Omega)$, ${\rm int}(\Omega)$ and $\partial \Omega$. Given $x\in {\mathcal H}$, the set of all points $y\in \Omega$ nearest to $x$ is defined as
$$\mathbb{P}_\Omega(x) = \{y\in \Omega \mid d(x,\Omega)=\Vert x-y \Vert\}.$$ We also denote respectively by ${\mathcal N}^P_\Omega(x)$ and ${\mathcal N}_\Omega(x)$ the proximal normal cone and the Clarke normal cone of $\Omega$ at $x$, which are defined as follows.

\begin{definition} {\rm The set ${\mathcal T}_{\Omega}(x)$ is called \textit{the Clarke tangent cone} to $\Omega$ at $x$ is the set of all vector $v\in{\mathcal H}$ such that
		$$\lim\limits_{t\to 0^+, y \xrightarrow{\Omega} x} \frac{d(y+tv,\Omega)}{t}=0.$$
		\textit{The Clarke normal cone} to $\Omega$ at $x$ is the polar cone of the Clarke tangent cone, i.e., 
		$${\mathcal{N}}_\Omega(x):=\big\{ x^*\in {\mathcal H}\mid \langle x^*, v\rangle \leq 0\;\;\text {for all }\;\; v\in {\mathcal T}_\Omega(x)\big\}.$$}
\end{definition}

\begin{definition} {\rm
		A vector $v\in {\mathcal H}$ is a \textit{proximal subgradient} of a function $f:{\mathcal H}\to \mathbb{R}$ at $x$ if there exist a
		real number $\sigma \geq 0$ and a neighborhood $U$ of $x$ such that
		$$\langle v, x^\prime-x \rangle \leq f(x^\prime)-f(x)+ \sigma \Vert x^\prime-x\Vert^2,$$ 
		for all $x'\in U$.}
\end{definition}
\begin{definition} {\rm A vector $v\in {\mathcal H}$ is a \textit{proximal normal vector} to $\Omega$ at $x \in \Omega$ when it is a proximal subgradient of the indicator function of $\Omega$, that is, when there exist a constant $\sigma \geq 0$ and a neighborhood $U$ of $x$ such that $\langle v, x^\prime-x\rangle \leq  \sigma \Vert x^\prime-x \Vert^2$  for all $x^\prime\in U\cap \Omega$. The set of such vectors, which is denoted by $\mathcal{N}^P_\Omega(x)$, is said to be \textit{the proximal normal cone} of $\Omega$ at $x$.}
\end{definition}

\begin{definition}
	{\rm A nonempty closed set $\Omega$ is called \textit{$r$-prox-regular} if for all $x\in \Omega$, for all $t\in(0,r)$ and for all $\xi \in {\mathcal N}^P_{\Omega}(x)$ such that $\Vert \xi \Vert = 1$, one has $x\in \mathbb{P}_\Omega(x+t\xi)$.}
\end{definition}
A convex set is a $r$-prox-regular set for all $r>0$. Properties of prox-regular sets and their applications have been thoroughly studied in~\cite{ANT,ColomboThibault}.

	Proofs of the next proposition can be found in~the books by Benyamini and Lindenstrauss~\cite[Corollary~5.12 and Theorem~5.21]{benyamini1998} and by Diestel and Uhl~\cite[Corollary~13 of Chapter 3 and Section 6 of Chapter VII]{diestel1977}.  
		
		\begin{proposition}\label{differentiablity}
		Let $f:[a,b]\to {\mathcal H}$ be absolutely continuous. Then, $f$ is Fr\'echet differentiable almost everywhere on $[a,b]$ with respect to the Lebesgue measure.
	\end{proposition}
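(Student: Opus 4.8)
The plan is to prove the integral representation $f(t)=f(a)+\int_a^t g(s)\,ds$ for a suitable Bochner-integrable $g\colon[a,b]\to\mathcal H$, and then to differentiate this vector integral almost everywhere. As a harmless first reduction, since $[a,b]$ is compact and $f$ is continuous, $f([a,b])$ is separable, so $f$ takes values in the closed linear span $\mathcal H_0$ of $f([a,b])$, which is itself a separable Hilbert space; it suffices to treat $\mathcal H_0$, so I may assume $\mathcal H$ is separable (hence isometric to $\mathbb R^n$ or $\ell^2$), which makes the measurability issues in the second step transparent.

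The main step is the representation. I would associate with $f$ the finitely additive $\mathcal H$-valued set function $\nu$ on finite unions of subintervals of $[a,b]$ given by $\nu\big((s,t]\big)=f(t)-f(s)$. The absolute continuity of $f$ in the $\varepsilon$-$\delta$ sense translates exactly into the statements that $\nu$ has finite total variation $|\nu|$, that $|\nu|\big((s,t]\big)=\mathrm{Var}(f;[s,t])$, and that $|\nu|$ is absolutely continuous with respect to Lebesgue measure $\lambda$; in particular $\nu$ extends to a countably additive $\mathcal H$-valued Borel measure vanishing on $\lambda$-null sets. At this point I invoke the \emph{Radon--Nikod\'ym property}: every Hilbert space is reflexive, and reflexive Banach spaces have the Radon--Nikod\'ym property, so there is $g\in L^1([a,b];\mathcal H)$ with $\nu(E)=\int_E g\,d\lambda$ for all Borel $E$; taking $E=(a,t]$ gives $f(t)-f(a)=\int_a^t g(s)\,ds$. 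I expect this to be the genuine obstacle, and the only place where more than soft analysis is needed: in Banach spaces lacking the Radon--Nikod\'ym property, such as $c_0$ or $L^1[0,1]$, there exist Lipschitz — hence absolutely continuous — curves that are nowhere differentiable, so the Hilbert (or at least reflexive) structure is essential here.

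Granting the representation, the remaining step is the Lebesgue differentiation theorem for Bochner integrals: for $\lambda$-almost every $t$, the point $t$ is a \emph{Lebesgue point} of $g$, i.e. $\tfrac1h\int_t^{t+h}\|g(s)-g(t)\|\,ds\to 0$ as $h\to 0$. This part is soft and works in any Banach space — $g$ is essentially separably valued, so one fixes a countable dense set $\{v_k\}$ in its essential range, applies the scalar Lebesgue differentiation theorem to each $s\mapsto\|g(s)-v_k\|$ off a single $\lambda$-null set, and combines. At such a point $t$,
\[
\Big\|\frac{f(t+h)-f(t)}{h}-g(t)\Big\|=\Big\|\frac1h\int_t^{t+h}\big(g(s)-g(t)\big)\,ds\Big\|\le\frac1h\int_t^{t+h}\|g(s)-g(t)\|\,ds\longrightarrow 0,
\]
so $f$ is Fr\'echet differentiable at $t$ with $f'(t)=g(t)$; the exceptional set being $\lambda$-null, the proposition follows.

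An alternative route, which some find cleaner, avoids vector measures: reduce absolute continuity to the Lipschitz case via the arc-length reparametrization $f=h\circ\varphi$, where $\varphi(t)=t+\mathrm{Var}(f;[a,t])$ is absolutely continuous and strictly increasing (so $\varphi([a,b])$ is a full interval) and $h$ is $1$-Lipschitz on that interval, because $\|f(t)-f(s)\|\le \mathrm{Var}(f;[s,t])\le\varphi(t)-\varphi(s)$. Then $\varphi$ is a.e. differentiable as a monotone real function, $h$ is a.e. differentiable by the Lipschitz characterization of the Radon--Nikod\'ym property, and a short argument handling the null image of $\{\varphi'=0\}$ chains the two into differentiability of $f$. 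Either way, the substantive ingredient is the Radon--Nikod\'ym property of $\mathcal H$, which is exactly what the cited results of Benyamini--Lindenstrauss and Diestel--Uhl package.
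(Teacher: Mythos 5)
Your proof is correct and takes essentially the same route the paper relies on: the paper gives no argument of its own but simply cites Benyamini--Lindenstrauss and Diestel--Uhl, whose content is precisely the Radon--Nikod\'ym-property argument you spell out (separable reduction, the integral representation $f(t)=f(a)+\int_a^t g\,d\lambda$ via the RNP of the reflexive space $\mathcal H$, then Lebesgue points of the Bochner integral). So this is the standard packaged proof, written out in detail, with no gaps worth noting.
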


Let $T>0$ and $I=[0,T]$. Following Edmond and Thibault~\cite{ET2005}, we consider the next two assumptions.
\begin{assumption}{(H}{1)}
	\label{h1}
	\textit{For each $t\in I$, $C(t)$ is a nonempty closed subset of ${\mathcal H}$ which is $r$-prox-regular for some constant $r>0$.}
\end{assumption}
\begin{assumption}{(H}{2)}
	\label{h2}
	\textit{$C(t)$ varies in an absolutely continuous way, that is, there exists an absolutely continuous function $v:I\to \mathbb{R}$ such that for any $y\in {\mathcal H}$ and $s,t\in I$, one has
	\begin{equation*}\label{ET_property of C(.)}
	\Vert d(y,C(t)) - d(y,C(s)) \Vert \leq \vert v(s) - v(t)\vert.
	\end{equation*}}
\end{assumption}

The following result, which is a simplified form of Theorem 5.1 from \cite{ET2006}, provides us with an analogue of the Peano theorem~\cite[Theorem~2.1, p.~10]{hartman1964} which works for ordinary differential equations.

	\begin{theorem} {\rm (See~\cite[Theorem 5.1]{ET2006})}\label{sol_existence_0}
		Assume that a family of sets $C(t)$, $t\in I$, in ${\mathcal H}$ satisfies the assumptions~{\rm \ref{h1}} and {\rm \ref{h2}}. Assume that $G: I \times {\mathcal H}\rightrightarrows {\mathcal H}$ is a set-valued map with nonempty convex compact values such that
		\begin{enumerate}[label={\rm(\alph*)}]
			\item For any $x\in {\mathcal H}$, $G(\cdot,x)$ has a measurable selection;
			\item For all $t\in I$, $G(t,\cdot)$ is scalarly upper semicontinuous on ${\mathcal H}$;
			\item For some compact subset $K\subset \mathbb{B}$ and for some non-negative function $\beta(\cdot)\in L^1(I,\mathbb{R})$, one has for all $(t,x)\in I\times {\mathcal H}$, $$G(t,x) \subset \beta(t)(1+\Vert x \Vert)K.$$
		\end{enumerate}
		Assume also that ${\mathcal H}$ is separable if $G\not\equiv \{0\}$. Then, for any $x_0\in C(0)$, the sweeping process
		\begin{equation}\label{ettheorem_0}
		\begin{cases}
		-\dot{x}(t)\in {\mathcal{N}}_{C(t)}(x(t)) + G(t, x(t)) \quad \text{a.e.}\ t\in [0,T]\\
		x(0)=x_0,
		\end{cases}
		\end{equation}
		has at least one  absolutely continuous solution $x(\cdot)$. 
\end{theorem}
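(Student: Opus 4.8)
The plan is to prove Theorem~\ref{sol_existence_0} by Moreau's catching-up (semi-implicit Euler) algorithm, adapted to the prox-regular perturbed setting, followed by a compactness-and-closure passage to the limit. The first move is an \emph{a priori bound}: any solution $x(\cdot)$ of~\eqref{ettheorem_0}, with an associated measurable selection $\zeta(t)\in G(t,x(t))$, should satisfy $\|\dot x(t)\|\le |\dot v(t)|+2\|\zeta(t)\|$ a.e.; this follows by writing $-\dot x(t)-\zeta(t)\in{\mathcal N}_{C(t)}(x(t))$ and combining Assumption~\ref{h2} with the good behaviour of the metric projection onto an $r$-prox-regular set near the set (single-valuedness and local Lipschitzness on the enlargement $U_r(C(t))$). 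Since $\|\zeta(t)\|\le\beta(t)(1+\|x(t)\|)$ by~(c), Gronwall's lemma yields a constant $M$ depending only on $\|x_0\|$, $v$ and $\|\beta\|_{L^1}$ with $\sup_{t\in I}\|x(t)\|\le M$. Hence it suffices to work with $G$ restricted to $I\times M\mathbb B$, whose values lie in the $L^1$-dominated, compact-scaled family $\beta(t)(1+M)K$; one may, if convenient, also reparametrize time by the absolutely continuous nondecreasing clock $\mu(t)=t+v(t)+(1+M)\int_0^t\beta$, a routine but mildly delicate step since $\mu^{-1}$ need not be absolutely continuous.

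Next comes the \emph{catching-up scheme}. For each $n$ pick a partition $0=t_0^n<\dots<t_{k_n}^n=T$ of mesh tending to $0$ and fine enough that $|v(t_{i+1}^n)-v(t_i^n)|+(1+M)\int_{t_i^n}^{t_{i+1}^n}\beta<r/2$ for all $i$, which is possible by absolute continuity of $v$ and of the integral of $\beta$. Set $x_0^n=x_0$; given $x_i^n\in C(t_i^n)$, use~(a) and separability of ${\mathcal H}$ to choose a measurable $\zeta_i^n\in G(t_i^n,x_i^n)$ and define $x_{i+1}^n:=\mathbb P_{C(t_{i+1}^n)}\big(x_i^n-(t_{i+1}^n-t_i^n)\zeta_i^n\big)$. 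The argument of the projection lies within distance $<r/2$ of $C(t_{i+1}^n)$ by Assumption~\ref{h2} and $\|\zeta_i^n\|\le\beta(t_i^n)(1+M)$, so $r$-prox-regularity makes $\mathbb P_{C(t_{i+1}^n)}$ single-valued there; moreover $-\big(x_{i+1}^n-x_i^n+(t_{i+1}^n-t_i^n)\zeta_i^n\big)\in{\mathcal N}^P_{C(t_{i+1}^n)}(x_{i+1}^n)$ and $\|x_{i+1}^n-x_i^n\|\le |v(t_{i+1}^n)-v(t_i^n)|+2(t_{i+1}^n-t_i^n)\|\zeta_i^n\|$. Linear interpolation yields absolutely continuous maps $x_n:I\to{\mathcal H}$ whose derivatives $\dot x_n$ are bounded in $L^1(I,{\mathcal H})$ by a family that is uniformly integrable in $n$ (built from $\dot v$ and $\beta$), and which satisfy, on each $[t_i^n,t_{i+1}^n)$, the relation $-\dot x_n(t)-\zeta_i^n\in{\mathcal N}^P_{C(t_{i+1}^n)}(x_{i+1}^n)$; step functions $\theta_n$ and $\zeta_n$ record the discretization.

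For the \emph{passage to the limit}, compactness of $K$ and $L^1$-domination let the Dunford--Pettis theorem furnish a subsequence along which $\zeta_n\rightharpoonup\zeta$ and $\dot x_n\rightharpoonup w$ weakly in $L^1(I,{\mathcal H})$. The crucial point is \emph{strong} convergence of the iterates: estimating $\|x_n-x_m\|$ via the local hypomonotonicity of the proximal normal cones of $r$-prox-regular sets (the hypomonotonicity modulus being controlled by $\tfrac{1}{r}$ times the norms of the normals in play — these are only $L^1$, but that still suffices for a Gronwall argument) shows that $\{x_n\}$ is Cauchy in $C(I,{\mathcal H})$, so $x_n\to x$ uniformly, whence $w=\dot x$ and $x(t)\in C(t)$ for every $t$ by continuity of $C(\cdot)$. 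A standard closure argument then finishes: Mazur's lemma produces convex combinations of the $\zeta_n$, respectively of the normals $-\dot x_n-\zeta_n$, converging strongly a.e.; scalar upper semicontinuity~(b) together with convexity and compactness of $G(t,x(t))$ gives $\zeta(t)\in G(t,x(t))$ a.e., while closedness of the proximal-normal-cone graph of an $r$-prox-regular set and the identity ${\mathcal N}_{C(t)}=\overline{\mathrm{co}}\,{\mathcal N}^P_{C(t)}$ give $-\dot x(t)-\zeta(t)\in{\mathcal N}_{C(t)}(x(t))$ a.e. Together with $x(0)=x_0$, this exhibits $x(\cdot)$ as the desired absolutely continuous solution.

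The \emph{main obstacle} is the infinite-dimensional part of the last paragraph: in finite dimensions everything is classical, but here one must genuinely prove that the catching-up iterates converge \emph{strongly} in $C(I,{\mathcal H})$ — this is where compactness of $K$ and separability of ${\mathcal H}$ enter — and this rests on the uniform local hypomonotonicity supplied by $r$-prox-regularity, after which the measurable-selection/closure step identifying the weak $L^1$-limit $\zeta$ as a selection of $G(\cdot,x(\cdot))$ is the remaining delicate point.
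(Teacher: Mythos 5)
First, a point of orientation: the paper does not prove this statement at all --- Theorem~\ref{sol_existence_0} is imported verbatim (in simplified form) from Edmond and Thibault \cite[Theorem~5.1]{ET2006}, so there is no in-paper proof to match. Your sketch is therefore an attempt to re-prove the cited result, and in outline it follows the same strategy as the original source (Moreau's catching-up scheme plus a compactness/closure passage to the limit). The discretization step itself is essentially right: single-valuedness of $\mathbb{P}_{C(t_{i+1}^n)}$ on the $r$-enlargement, the proximal-normal inclusion for the increments, and the bound $\Vert x_{i+1}^n-x_i^n\Vert\leq \vert v(t_{i+1}^n)-v(t_i^n)\vert+2(t_{i+1}^n-t_i^n)\Vert\zeta_i^n\Vert$ are all correct (modulo the technical point that $\beta$ is only in $L^1$, so pointwise evaluations $\beta(t_i^n)$ must be replaced by integral averages over the subintervals, and the a priori bound $M$ must be re-derived for the discrete iterates by a discrete Gronwall argument rather than borrowed from exact solutions).

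The genuine gap is the convergence step. You claim that $\{x_n\}$ is Cauchy in $C(I,{\mathcal H})$ by hypomonotonicity of the proximal normal cones plus Gronwall. That argument cannot work under hypothesis (b): when you estimate $\frac{d}{dt}\Vert x_n(t)-x_m(t)\Vert^2$, the normal-cone parts are indeed handled by $r$-prox-regularity, but the perturbation part produces the cross term $\langle \zeta_n(t)-\zeta_m(t),\,x_n(t)-x_m(t)\rangle$, and a scalarly upper semicontinuous multimap gives no control of $\Vert\zeta_n(t)-\zeta_m(t)\Vert$ by $\Vert x_n(t)-x_m(t)\Vert$; only an $L^1$ bound is available, which is not small as $n,m\to\infty$. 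A Cauchy property of the whole interpolated sequence would essentially be a uniqueness-type statement, and solutions of \eqref{ettheorem_0} need not be unique under (a)--(c) (this is exactly the Peano-versus-Picard--Lindel\"of distinction the paper draws between Theorem~\ref{sol_existence_0} and Theorem~\ref{sol_existence}; the Lipschitz hypothesis (i) is what powers the Gronwall argument in the latter). The compactness of $K$ and the separability of ${\mathcal H}$ must instead be used to produce strong compactness of the approximants --- e.g.\ pointwise relative compactness of $\{x_n(t)\}_n$ via measure-of-noncompactness estimates exploiting the Lipschitz behaviour of $\mathbb{P}_{C(s)}$ on $U_r(C(s))$ and the fact that the $\zeta_n$ take values in the scaled compact set $\beta(t)(1+M)K$ --- which is precisely where the real work in \cite{ET2006} lies and which your sketch replaces by an inapplicable Gronwall estimate. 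A second, repairable, issue sits in the closure step: your scheme freezes the selection as $\zeta_i^n\in G(t_i^n,x_i^n)$, constant on the subinterval, but (b) gives upper semicontinuity only in $x$ for fixed $t$, so the limit identification $\zeta(t)\in G(t,x(t))$ does not follow; one should instead select $\zeta_n(t)\in G(t,x_n(\theta_n(t)))$ measurably in $t$ on each subinterval (using (a) and separability), so that only the $x$-variable moves when passing to the limit.
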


The next result is an analogue of the Picard-Lindel\"of theorem~\cite[Theorem~1.1, p.~8]{hartman1964} from the theory of ordinary differential equations.

\begin{theorem} {\rm (See~\cite[Theorem 1]{ET2005})}\label{sol_existence}
	Assume that a family of sets $C(t)$, $t\in I$, in ${\mathcal H}$ satisfies the assumptions~{\rm \ref{h1}} and {\rm \ref{h2}}. Let $g: I \times {\mathcal H}\to {\mathcal H}$ be such a separately measurable map on $I$ that
	\begin{enumerate}[label={\rm(\roman*)}]
		\item For every $\eta > 0$, there exists a non-negative function $k_\eta(\cdot) \in L^1(I,\mathbb{R})$ such that for all $t\in I$ and for any $x,y \in \overline{\mathbb{B}}(0,\eta)$ one has
		\begin{equation*}
		\Vert g(t,x)-g(t,y)\Vert \leq k_\eta(t)\Vert x - y \Vert;
		\end{equation*}
		\item There exists a non-negative function $\beta(\cdot)\in L^1(I,\mathbb{R})$ such that, for all $t\in I$ and for all $x\in \displaystyle\bigcup_{s\in I}C(s)$, one has
		$\Vert g(t,x) \Vert \leq \beta(t)(1+\Vert x \Vert).$
	\end{enumerate}
	Then, for any $x_0\in C(0)$, the sweeping process~\eqref{ettheorem} has one and only one absolutely continuous solution $x(\cdot)$. In addition, the solution satisfies the estimate
	\begin{equation*}\label{sol_bound}
	\Vert \dot{x}(t)+g(t,x(t)) \Vert\leq (1+M_{x_0})\beta(t)+\vert \dot{v}(t) \vert\ \; {\rm a.e.}\ t\in I,
	\end{equation*}
	where 
	\begin{equation*}\label{M} M_{x_0} := \Vert x_0 \Vert + \exp\left\{2\int_{0}^{T}\beta(s)ds\right\}\int_0^T\big(2\beta(s)(1+\Vert x_0 \Vert )+\vert \dot{v}(s)\vert\big)ds.\end{equation*}
\end{theorem}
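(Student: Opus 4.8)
\emph{Overview.} The plan is to paraphrase the classical Picard--Lindel\"of argument: first derive an a priori bound that confines every solution of~\eqref{ettheorem} to a fixed ball; then obtain local existence by a Banach fixed point argument layered on the well-posedness of the sweeping process perturbed by a \emph{prescribed} $L^1$ function of $t$; extend the local solution to all of $I$ by means of the a priori bound; and finally deduce uniqueness, together with the displayed estimate, from the hypomonotonicity of the normal cone of a prox-regular set and Gr\"onwall's inequality.

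\emph{The a priori estimate.} Let $x(\cdot)$ solve~\eqref{ettheorem} and set $\zeta(t):=-\dot{x}(t)-g(t,x(t))\in{\mathcal N}_{C(t)}(x(t))$. Fix a point $t$ at which $x$ is differentiable (a.e.\ by Proposition~\ref{differentiablity}). For $h>0$ one has $x(t+h)\in C(t+h)$, so by Assumption~\ref{h2} there is $w_h\in C(t)$ with $\|x(t+h)-w_h\|\le|v(t+h)-v(t)|+o(h)$; inserting this into the subsmoothness inequality $\langle\zeta(t),w-x(t)\rangle\le\tfrac{\|\zeta(t)\|}{2r}\|w-x(t)\|^2$ --- valid for all $w\in C(t)$ since $C(t)$ is $r$-prox-regular --- then dividing by $h$ and letting $h\downarrow0$ yields $\langle\zeta(t),\dot{x}(t)\rangle\le\|\zeta(t)\|\,|\dot{v}(t)|$. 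Since $\zeta(t)=-(\dot{x}(t)+g(t,x(t)))$, this rearranges into
\begin{equation*}
\|\dot{x}(t)+g(t,x(t))\|\le\|g(t,x(t))\|+|\dot{v}(t)|\le\beta(t)\bigl(1+\|x(t)\|\bigr)+|\dot{v}(t)|\quad\text{a.e.,}
\end{equation*}
whence $\tfrac{d}{dt}\|x(t)\|\le\|\dot{x}(t)\|\le2\beta(t)(1+\|x(t)\|)+|\dot{v}(t)|$; a Gr\"onwall-type estimate for this integral inequality gives $\|x(t)\|\le M_{x_0}$ throughout $I$, and feeding that back into the display produces the asserted bound for $\|\dot{x}(t)+g(t,x(t))\|$. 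The same reasoning applies verbatim on any subinterval $[0,\tau]\subseteq I$.

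\emph{Existence.} The key technical input is that, under Assumptions~\ref{h1}--\ref{h2}, for every $f\in L^1(I,{\mathcal H})$ the sweeping process $-\dot{y}(t)\in{\mathcal N}_{C(t)}(y(t))+f(t)$ a.e.\ on $I$, $y(0)=x_0$, has a unique absolutely continuous solution $S(f)$; moreover $S(f)(t)\in C(t)$ for all $t$, and $S$ depends Lipschitz-continuously on the forcing, $\|S(f_1)(t)-S(f_2)(t)\|\le\kappa\int_0^t\|f_1(s)-f_2(s)\|\,ds$, with a constant $\kappa$ that may be taken uniform over forcings sharing a common $L^1$-bound. This is exactly what Moreau's catching-up algorithm delivers once combined with the prox-regular estimates of~\cite{BT2005,ET2006}. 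Put $\eta_0:=\|x_0\|+1$ and $\Phi(u):=S\bigl(g(\cdot,u(\cdot))\bigr)$. For $\delta>0$ small, $\Phi$ maps the complete metric space $\bigl\{u\in{\mathcal C}([0,\delta],{\mathcal H}):u(0)=x_0,\ \|u(t)-x_0\|\le1\bigr\}$ into itself --- because $\|S(0)(t)-x_0\|\le\int_0^t|\dot{v}(s)|\,ds$ (the unperturbed solution moves with speed at most $|\dot{v}(\cdot)|$) and $\|S(g(\cdot,u(\cdot)))(t)-S(0)(t)\|\le\kappa\int_0^t\beta(s)(1+\eta_0)\,ds$, both tending to $0$ as $t\downarrow0$ --- and, using hypothesis~(i) with $\eta=\eta_0$, the estimate $\|\Phi(u_1)(t)-\Phi(u_2)(t)\|\le\kappa\int_0^tk_{\eta_0}(s)\|u_1(s)-u_2(s)\|\,ds$ makes $\Phi$ a contraction for the Bielecki-type norm $\|u\|_*:=\sup_{t}e^{-2\kappa\int_0^tk_{\eta_0}(s)\,ds}\|u(t)\|$. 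Its fixed point is a local solution of~\eqref{ettheorem}. By the a priori estimate the maximal solution remains in $\overline{\mathbb{B}}(0,M_{x_0})$ and satisfies $\|\dot{x}(t)\|\le2\beta(t)(1+M_{x_0})+|\dot{v}(t)|\in L^1(I,\mathbb{R})$, hence it cannot blow up in finite time; a continuation argument --- using that $C(\cdot)$ varies continuously, so that one-sided limits of the trajectory still belong to the constraint sets --- then extends it to a solution on all of $I$.

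\emph{Uniqueness, and the main obstacle.} If $x_1,x_2$ both solve~\eqref{ettheorem}, then by the a priori estimate both lie in $\overline{\mathbb{B}}(0,M_{x_0})$ and $\zeta_i(t):=-\dot{x}_i(t)-g(t,x_i(t))\in{\mathcal N}_{C(t)}(x_i(t))$ satisfy $\|\zeta_i(t)\|\le(1+M_{x_0})\beta(t)+|\dot{v}(t)|\in L^1(I,\mathbb{R})$. Adding two instances of the subsmoothness inequality gives the hypomonotonicity estimate $\langle\zeta_1(t)-\zeta_2(t),x_1(t)-x_2(t)\rangle\ge-\tfrac{\|\zeta_1(t)\|+\|\zeta_2(t)\|}{2r}\|x_1(t)-x_2(t)\|^2$, so that, invoking also hypothesis~(i) with $\eta=M_{x_0}$,
\begin{equation*}
\tfrac{d}{dt}\|x_1(t)-x_2(t)\|^2\le\Bigl(\tfrac{1}{r}\bigl(\|\zeta_1(t)\|+\|\zeta_2(t)\|\bigr)+2k_{M_{x_0}}(t)\Bigr)\|x_1(t)-x_2(t)\|^2
\end{equation*}
with a coefficient in $L^1(I,\mathbb{R})$; since $x_1(0)=x_2(0)$, Gr\"onwall's inequality forces $x_1\equiv x_2$. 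I expect the main obstacle to be the technical input invoked in the existence step --- the well-posedness of, and Lipschitz dependence on the data for, the sweeping process driven by a prescribed $L^1$ perturbation over moving $r$-prox-regular sets, in a possibly nonseparable Hilbert space. That is precisely where prox-regularity does the essential work, through the subsmoothness and hypomonotonicity inequalities that substitute for the monotonicity available in Moreau's convex setting; once it is granted, the remaining steps amount to a routine Gr\"onwall-and-fixed-point package.
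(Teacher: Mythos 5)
First, note that the paper does not prove this theorem at all: it is imported verbatim as \cite[Theorem~1]{ET2005}, so the relevant benchmark is Edmond--Thibault's original proof, which constructs the solution directly by Moreau's catching-up (time-discretization) scheme combined with prox-regularity estimates, and then gets uniqueness from hypomonotonicity plus Gr\"onwall. Your uniqueness paragraph is essentially that standard argument and is fine. However, your proposal has two genuine gaps. (1) \emph{The a priori estimate has a sign error that kills the step as written.} With $h\downarrow 0$ your computation gives $\langle\zeta(t),\dot x(t)\rangle\le\|\zeta(t)\|\,|\dot v(t)|$; substituting $\dot x=-\zeta-g$ this reads $-\|\zeta\|^2-\langle\zeta,g\rangle\le\|\zeta\|\,|\dot v|$, which is vacuously true and does not ``rearrange'' into $\|\zeta\|\le\|g\|+|\dot v|$. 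The useful inequality is the opposite one, $\langle\zeta(t),\dot x(t)\rangle\ge-\|\zeta(t)\|\,|\dot v(t)|$, which you get from the \emph{left} difference quotient ($h\uparrow 0$: $x(t+h)\in C(t+h)$, project onto $C(t)$, divide by the negative increment, which flips the inequality). Then indeed $\|\zeta\|^2\le-\langle\zeta,g\rangle+\|\zeta\|\,|\dot v|\le\|\zeta\|(\|g\|+|\dot v|)$, and the rest of your Gr\"onwall computation producing $M_{x_0}$ goes through. The slip is repairable, but the derivation you wrote does not prove the velocity bound.

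(2) \emph{The existence step assumes what is to be proved.} Your ``key technical input'' --- existence, uniqueness, and a uniform Lipschitz dependence $\|S(f_1)(t)-S(f_2)(t)\|\le\kappa\int_0^t\|f_1-f_2\|\,ds$ for the sweeping process over the moving $r$-prox-regular sets $C(t)$ driven by a prescribed $f\in L^1(I,{\mathcal H})$ --- is exactly the special case $g(t,x):=f(t)$ of the theorem (conditions (i)--(ii) hold trivially for such $g$), and it contains the entire analytic content: convergence of the catching-up iterates, the prox-regular estimates of Lemma~\ref{prox_property1}, and the treatment of a possibly nonseparable ${\mathcal H}$. Invoking \cite{BT2005,ET2006} for it turns your argument into a reduction of the state-dependent case to the forcing-dependent case, not a proof; moreover the quantitative Lipschitz dependence with $\kappa$ uniform over forcings with a common $L^1$ bound is itself a nontrivial consequence of hypomonotonicity plus the (corrected) velocity bound and would need its own proof. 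Two smaller points: hypothesis (ii) bounds $g$ only on $\bigcup_{s\in I}C(s)$, so for $u$ in your ball you must control $\|g(s,u(s))\|$ via (i) together with $x_0\in C(0)$ (i.e.\ $\|g(s,u(s))\|\le\beta(s)(1+\|x_0\|)+k_{\eta_0}(s)$), not via (ii) as your display suggests; and Edmond--Thibault avoid the local-in-time fixed point and continuation altogether by running the discretization globally on $I$ with the a priori bound built in, which is why their route is shorter than the Picard--Lindel\"of template you follow.
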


 When $G$ is a single-valued mapping, Theorem~\ref{sol_existence_0} gives sufficient conditions for the existence of solution to problem~\eqref{ettheorem}. Meanwhile, Theorem~\ref{sol_existence} provides conditions for the existence and uniqueness of solution to problem~\eqref{ettheorem}. However, the assumption (c) in Theorem~\ref{sol_existence_0} is tighter than the assumption (ii) in Theorem~\ref{sol_existence}. To justify this fact, let us consider the following example.

	\begin{example}
		Let $\mathcal{H}$ be an infinite dimensional Hilbert space. Consider the problem~\eqref{ettheorem} with $C(t)$ satisfying the assumptions~\ref{h1} and~\ref{h2}. Let $g:I \times {\mathcal H}\rightrightarrows {\mathcal H}$, $g(t,x) = t\mathbb{P}_{\mathbb{B}}(x)$. We see that $g$ is linear with respect to $t$. In addition, since the projection map onto a closed convex set in Hilbert space is Lipschitz continuous, $g$ satisfies the assumptions (a), (b) of Theorem~\ref{sol_existence_0} and~(i) of Theorem~\ref{sol_existence}. Moreover, since $\Vert g(t,x) \Vert \leq t$ for all $t\in I$, the assumption~(ii) of Theorem~\ref{sol_existence} is also valid. However, the unit ball $\mathbb{B}$ in $\mathcal{H}$ is non-compact, so we cannot find any compact set $K$ such that the assumption (c) of Theorem~\ref{sol_existence_0} holds. So, it is not possible to apply Theorem~\ref{sol_existence_0} in this case. Nevertheless, for any $x_0\in C(0)$, Theorem~\ref{sol_existence} assures the solution existence and uniqueness of the problem under consideration. 
	\end{example}

\begin{remark}\label{remark_additional}{\rm Since the assumptions of the Peano theorem are weaker than those of the Picard-Lindel\"of theorem, it would be nice if one can have another version of Theorem~\ref{sol_existence_0} whose assumption set is weaker than that of Theorem~\ref{sol_existence}.}
\end{remark}

From a result of Edmond and Thibault~\cite[Proposition~2]{ET2005} it follows that, for every $t\in I$, the mapping $\psi_t:C(0)\to C(t)$ with $\psi_t(x_0):=x(x_0,t)$, where $x(x_0,\cdot)$ denotes the unique solution $x(\cdot)$ of~\eqref{ettheorem} with the initial value $x(0) =x_0$, is Lipschitz on any bounded subset of $C(0)$.

In the sequel, we will need the following characterization of a $r$-prox-regular set.

\begin{lemma}\label{prox_property1}{\rm (See~\cite[Theorem 3, p. 108]{ColomboThibault})}
	Let $\Omega$ be a closed subset of ${\mathcal H}$ and $r>0$. If $\Omega$ is $r$-prox-regular then for any $x,x^\prime \in \Omega$ and $v\in {\mathcal N}^P_\Omega(x)$, one has
	$$\langle v, x^\prime -x\rangle \leq \frac{1}{2r}\Vert v \Vert \Vert x^\prime - x\Vert^2.$$
\end{lemma}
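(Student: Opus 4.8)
The plan is to reduce the claim to the defining property of $r$-prox-regularity by a homogeneity argument and then expand a single squared-norm inequality. First I would dispose of the trivial case $v=0$, in which both sides vanish. For $v\neq 0$, since $\mathcal{N}^P_\Omega(x)$ is a cone, the normalized vector $\xi:=v/\Vert v\Vert$ again belongs to $\mathcal{N}^P_\Omega(x)$ and satisfies $\Vert\xi\Vert=1$; moreover both sides of the desired inequality are positively homogeneous of degree one in the normal vector, so it suffices to prove
\[
\langle \xi, x^\prime-x\rangle \leq \frac{1}{2r}\Vert x^\prime-x\Vert^2
\]
for every unit vector $\xi\in\mathcal{N}^P_\Omega(x)$ and every $x^\prime\in\Omega$, and then multiply through by $\Vert v\Vert$ at the end.

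Next, fix any $t\in(0,r)$. Applying the definition of $r$-prox-regularity to $x\in\Omega$, to $t$, and to $\xi$, we get $x\in\mathbb{P}_\Omega(x+t\xi)$, hence $d(x+t\xi,\Omega)=\Vert(x+t\xi)-x\Vert=t$. Consequently $\Vert x+t\xi-x^\prime\Vert\geq t$ for every $x^\prime\in\Omega$, and squaring this inequality gives $\Vert x-x^\prime\Vert^2+2t\langle\xi,x-x^\prime\rangle+t^2\Vert\xi\Vert^2\geq t^2$. Since $\Vert\xi\Vert=1$, the $t^2$ terms cancel, and dividing by $2t>0$ yields $\langle\xi,x^\prime-x\rangle\leq\frac{1}{2t}\Vert x^\prime-x\Vert^2$.

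Finally I would let $t\to r^-$ in the last inequality to obtain $\langle\xi,x^\prime-x\rangle\leq\frac{1}{2r}\Vert x^\prime-x\Vert^2$, and then undo the normalization by multiplying both sides by $\Vert v\Vert$, which produces exactly the stated estimate. There is no serious obstacle here: the argument is essentially a one-line computation. The only points that require a little care are the reduction to unit-norm proximal normals (legitimate because $\mathcal{N}^P_\Omega(x)$ is a cone and both sides are degree-one homogeneous in the normal vector) and the fact that prox-regularity is guaranteed only for $t$ strictly less than $r$, so the value $t=r$ is reached only in the limit — which is harmless since the bound $\frac{1}{2t}\Vert x^\prime-x\Vert^2$ depends continuously on $t$.
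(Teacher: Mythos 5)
Your argument is correct: normalizing $v$ is legitimate because $\mathcal{N}^P_\Omega(x)$ is a cone and both sides of the inequality are positively homogeneous of degree one in the normal vector, the defining property of $r$-prox-regularity gives $d(x+t\xi,\Omega)=t$ for every unit proximal normal $\xi$ and every $t\in(0,r)$, and expanding $\Vert x+t\xi-x^\prime\Vert^2\geq t^2$ followed by the limit $t\to r^-$ yields exactly the stated estimate. Note that the paper itself offers no proof of this lemma; it is quoted from Colombo and Thibault, and your computation is essentially the standard argument behind that cited characterization, so you have in effect made the statement self-contained starting from the definition of prox-regularity given in Section 2. No gaps: the trivial case $v=0$, the strict inequality $t<r$ handled by passing to the limit, and the final multiplication by $\Vert v\Vert$ are all treated correctly.
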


\begin{remark}\label{proxnormal}
	{\rm If $\Omega\subset {\mathcal H}$ is $r$-prox-regular, then the proximal normal cone to $\Omega$ at any point $x\in \Omega$ coincides with the corresponding Clarke normal cone (see~\cite[Proposition 7(b)]{ColomboThibault}). So the set ${\mathcal N}^P_\Omega(x)$ in the formulation of Lemma \ref{prox_property1} can be replaced by ${\mathcal N}_\Omega(x)$.}
\end{remark}

\section{Solution Existence Theorems}
Let $T$ be a positive real number and $I=[0,T]$. Let there be given the functions $f_i: I\times {\mathcal H} \to \mathbb{R}$, $i\in \{1,\dots, m\}$. Suppose that the set
\begin{equation*}\label{ineq_system}
C(t):=\{x \in {\mathcal H} \mid f_i(t,x) \leq 0,\, i\in\{1,\dots,m\}\}
\end{equation*}
is nonempty for each $t\in I$. Assume that there is an extended real number $\rho \in [0,+\infty]$ satisfying the next four assumptions.
\begin{assumption}{(A}{1)}
	\label{a1}
	\textit{For $x\in {\mathcal H}$ and for all $i\in\{1,\dots, m \}$, $f_i(\cdot, x)$ is Lipschitz continuous with modulus $L_1>0$ on $[0,T]$.}
\end{assumption}

\begin{assumption}{(A}{2)}
	\label{a2}
	\textit{For each $t\in[0,T]$, and for all $i \in \{1,\dots, m\}$, $f_i(t,\cdot)$ is locally Lipschitz continuous on $U_\rho(C(t))$.}
\end{assumption}

\begin{assumption}{(A}{3)}
	\label{a3}
	\textit{There is $\gamma>0$ such that for all $t\in[0,T]$ and $i\in \{1,\ldots, m\}$, for all $x_1, x_2 \in U_\rho(C(t))$, and for all $\xi_j\in \partial^C f_i(t, \cdot)(x_j)$, $ j=1, 2$,
	\begin{equation*}
	\langle \xi_1-\xi_2, x_1-x_2 \rangle \geq -\gamma \Vert x_1-x_2 \Vert^2.
	\end{equation*}}
\end{assumption}

\begin{assumption}{(A}{4)}
	\label{a4}
	\textit{There is $\mu >0$ with the property that for all $t\in [0,T]$ and $x\in C(t)$ one can find $\overline{v}=v(t,x)\in {\mathcal H}$ with $\Vert\overline{v}\Vert =1$ such that for all $i\in \{1,\ldots, m\}$, for all $\xi\in \partial^C f_i(t,\cdot)(x)$, one has $\langle \xi, \overline{v}\rangle \leq -\mu.$}
\end{assumption}

Clearly, if $\partial^C f_1(t,\cdot)$ is \textit{monotone} for every $t\in [0,T]$, i.e., $\langle \xi_1-\xi_2, x_1-x_2 \rangle \geq 0$ for all $x_1,x_2\in {\mathcal H}$ and for all $\xi_j\in \partial^C f_i(t, \cdot)(x_j)$, $ j=1, 2$, then Assumption \ref{a3} is satisfied with any $\gamma >0$.

\begin{lemma}\label{Lem1} {\rm (See \cite[Theorem~4.1]{ANT})}\label{proxregularity} For all $t\in [0,T]$, the set $C(t)$ is $r$-prox-regular  with $r=\min\{\rho, \frac{\mu}{\gamma}\}$.
\end{lemma}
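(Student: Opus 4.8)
This lemma is quoted from \cite[Theorem~4.1]{ANT}; below I describe the argument underlying it, adapted to the present notation. The plan is to verify $r$-prox-regularity directly from the definition, reducing everything to one quadratic estimate on the proximal normal cone. Fix $t\in[0,T]$ (Assumption~\ref{a1} is not needed here; only \ref{a2}--\ref{a4} enter), and for $x\in C(t)$ let $I(t,x)=\{i\in\{1,\dots,m\}:f_i(t,x)=0\}$ be the active index set. The first step is to read off from Assumption~\ref{a4} a Mangasarian--Fromovitz-type constraint qualification: if $\sum_{i\in I(t,x)}\lambda_i\xi_i=0$ with $\lambda_i\ge 0$ and $\xi_i\in\partial^C f_i(t,\cdot)(x)$, then pairing with the unit vector $\overline{v}$ furnished by Assumption~\ref{a4} yields $0=\langle\sum\lambda_i\xi_i,\overline{v}\rangle\le-\mu\sum\lambda_i$, hence $\lambda_i=0$ for all $i$. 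By the constraint-qualified subdifferential calculus for inequality-defined sets (see \cite[Chapter~2]{Clarke}), it follows that for every $x\in C(t)$,
\begin{equation*}
\mathcal{N}^P_{C(t)}(x)\subseteq\mathcal{N}_{C(t)}(x)\subseteq\Big\{\textstyle\sum_{i\in I(t,x)}\lambda_i\xi_i\ :\ \lambda_i\ge 0,\ \xi_i\in\partial^C f_i(t,\cdot)(x)\Big\},
\end{equation*}
and pairing such a representation $v=\sum_{i\in I(t,x)}\lambda_i\xi_i$ with $\overline{v}$ gives the bound $\sum_{i\in I(t,x)}\lambda_i\le\mu^{-1}\Vert v\Vert$.

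The second step converts the hypomonotonicity in Assumption~\ref{a3} into a one-sided, convexity-type inequality: for each $i$ and $t$, any $x_1,x_2\in U_\rho(C(t))$ whose connecting segment lies in $U_\rho(C(t))$, and any $\xi_1\in\partial^C f_i(t,\cdot)(x_1)$, one has $\langle\xi_1,x_2-x_1\rangle\le f_i(t,x_2)-f_i(t,x_1)+\tfrac\gamma2\Vert x_2-x_1\Vert^2$. Indeed, \ref{a2} and \ref{a3} say that $x\mapsto f_i(t,x)+\tfrac\gamma2\Vert x\Vert^2$ is locally Lipschitz with monotone Clarke subdifferential along that segment, hence convex there (a classical fact: a locally Lipschitz function with monotone Clarke subdifferential on a convex set is convex), and the claimed inequality is precisely its subgradient inequality once one uses $\Vert x_2\Vert^2-\Vert x_1\Vert^2-2\langle x_1,x_2-x_1\rangle=\Vert x_2-x_1\Vert^2$. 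It is important to secure the sharp constant $\tfrac\gamma2$ here: a crude Lebourg mean-value estimate would give $\gamma$ instead and yield only the radius $\mu/(2\gamma)$, not $\mu/\gamma$.

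The third step combines the two. Let $x\in C(t)$, $\xi\in\mathcal{N}^P_{C(t)}(x)$ with $\Vert\xi\Vert=1$, and $s\in(0,r)$ with $r=\min\{\rho,\mu/\gamma\}$; I must show $x\in\mathbb{P}_{C(t)}(x+s\xi)$, i.e., $\Vert x+s\xi-x'\Vert\ge s$ for all $x'\in C(t)$. If $\Vert x'-x\Vert\ge 2s$ this is immediate from the triangle inequality. If $\Vert x'-x\Vert<2s<2\rho$, then $[x,x']\subset U_\rho(C(t))$, because every point of this segment is within distance $\tfrac12\Vert x'-x\Vert<s\le\rho$ of $x$ or of $x'$, both of which lie in $C(t)$, so Step~2 applies; writing $\xi=\sum_{i\in I(t,x)}\lambda_i\xi_i$ as in Step~1 and using $f_i(t,x)=0$, $f_i(t,x')\le 0$ for active $i$,
\begin{equation*}
\langle\xi,x'-x\rangle=\sum_{i\in I(t,x)}\lambda_i\langle\xi_i,x'-x\rangle\le\frac\gamma2\Big(\sum_{i\in I(t,x)}\lambda_i\Big)\Vert x'-x\Vert^2\le\frac{\gamma}{2\mu}\Vert x'-x\Vert^2\le\frac{1}{2s}\Vert x'-x\Vert^2,
\end{equation*}
so that $\Vert x+s\xi-x'\Vert^2=\Vert x'-x\Vert^2-2s\langle\xi,x'-x\rangle+s^2\ge s^2$. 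In all cases $\Vert x+s\xi-x'\Vert\ge s=\Vert x+s\xi-x\Vert$, which is the defining property of $r$-prox-regularity; hence $C(t)$ is $r$-prox-regular. (When $r=0$ the assertion is vacuous, so one may assume $\rho>0$.)

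The step I expect to be the main obstacle is the first one: justifying the normal-cone representation, together with the coefficient bound $\sum\lambda_i\le\mu^{-1}\Vert v\Vert$, rigorously in the infinite-dimensional Hilbert setting, since the constraint-qualified calculus for $\max$-type inequality systems has to be invoked with some care there. A secondary technical point is that $U_\rho(C(t))$ need not be convex, which blocks a global convexification argument (adding $\tfrac\gamma2\Vert\cdot\Vert^2$ to each $f_i(t,\cdot)$ and declaring the result convex on $U_\rho(C(t))$); the remedy, used above, is that only segments joining two points of $C(t)$ ever occur in the estimates, and such segments lie inside $U_\rho(C(t))$ automatically.
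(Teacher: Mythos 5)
The paper itself contains no proof of this lemma: it is imported verbatim from \cite[Theorem~4.1]{ANT}, so the only "paper proof" is that citation, and what you have written is a correct self-contained reconstruction in the spirit of the cited result (normal-cone description of the sublevel set under the Slater/MFCQ-type condition \ref{a4}, combined with the hypomonotonicity \ref{a3}, yielding the proximal-normal quadratic estimate with the sharp constant). What your route buys over the paper's is transparency: it shows exactly where \ref{a2}--\ref{a4} enter, why \ref{a1} is irrelevant for fixed $t$, and why the radius is $\mu/\gamma$ rather than $\mu/(2\gamma)$ (your insistence on the constant $\tfrac{\gamma}{2}$ in the subgradient-type inequality, obtained by integrating $\tau\mapsto f_i(t,x_1+\tau(x_2-x_1))$ along the segment and using \ref{a3} between $x_1$ and points of the segment, rather than a crude mean-value bound). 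Concerning the step you flag as the main obstacle: it is standard and does go through in a general Hilbert (even Banach) space. Near $x\in\partial C(t)$ the set $C(t)$ is the sublevel set of the locally Lipschitz function $\max_i f_i(t,\cdot)$; by \cite[Proposition~2.3.12]{Clarke} its Clarke subdifferential at $x$ lies in ${\rm conv}\bigcup_{i\in I(t,x)}\partial^C f_i(t,\cdot)(x)$, every element $\xi$ of this hull satisfies $\langle\xi,\overline v\rangle\le-\mu<0$, so the constraint qualification $0\notin\partial^C(\max_if_i(t,\cdot))(x)$ holds and \cite[Theorem~2.4.7]{Clarke} gives ${\mathcal N}_{C(t)}(x)$ inside the closed cone generated by the hull; the same margin $\mu$, together with boundedness and weak compactness of the hull, shows this cone is already closed (so no closure operation spoils the representation) and gives your multiplier bound $\sum_i\lambda_i\le\mu^{-1}\Vert v\Vert$, while ${\mathcal N}^P_{C(t)}(x)\subset{\mathcal N}_{C(t)}(x)$ always. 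Your final estimate $\langle\xi,x'-x\rangle\le\tfrac{\gamma}{2\mu}\Vert x'-x\Vert^2$ is exactly the inequality of Lemma~\ref{prox_property1} with $r=\mu/\gamma$ (localized by $\rho$), and verifying the definition directly from it, with the case split at $\Vert x'-x\Vert=2s$, is correct; one cosmetic addition worth a line is that $C(t)$ is closed (as the definition of prox-regularity requires) because any limit of points of $C(t)$ lies in $U_\rho(C(t))$, where the $f_i(t,\cdot)$ are continuous, the case $\rho=0$ being vacuous as you note.
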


\begin{lemma}\label{Lem2}
	The set-valued map $C:I\rightrightarrows {\mathcal H}$ is Lipschitz with respect to the Hausdorff distance, with the Lipschitz modulus $\vartheta$, for any $\vartheta\geq \dfrac{L_1}{\mu}$.
\end{lemma}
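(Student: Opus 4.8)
The plan is to establish, for all $s,t\in I$ and every $x\in C(s)$, the one--sided bound $d(x,C(t))\le\frac{L_1}{\mu}\,|t-s|$; interchanging the roles of $s$ and $t$ then shows that both $\sup_{x\in C(s)}d(x,C(t))$ and $\sup_{x\in C(t)}d(x,C(s))$ are at most $\frac{L_1}{\mu}|t-s|$, i.e.\ the Hausdorff distance between $C(s)$ and $C(t)$ does not exceed $\frac{L_1}{\mu}|t-s|$, which is exactly the assertion for any $\vartheta\ge L_1/\mu$. So fix $x\in C(s)$ and assume $s<t$ (for $t<s$ one runs the construction below backwards in the parameter, the chain rule then producing $-\partial_\tau f_i$, which is still bounded by $L_1$). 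Put $\phi(\tau,z):=\max_{1\le i\le m}f_i(\tau,z)$, so that $C(\tau)=\{z:\phi(\tau,z)\le0\}$; Assumption~\ref{a1} gives $f_i(t,x)\le f_i(s,x)+L_1(t-s)\le L_1(t-s)$ for every $i$, hence $\phi(t,x)\le L_1(t-s)$.

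The core step is to ``track'' the moving set starting from $x$: I would construct an absolutely continuous curve $x(\cdot):[s,t]\to{\mathcal H}$ with $x(s)=x$, $x(\tau)\in C(\tau)$ for all $\tau$, and $\dot x(\tau)=\frac{L_1}{\mu}\,w(\tau)$ a.e., where $w(\tau)\in W\bigl(\tau,x(\tau)\bigr)$ and
\[
W(\tau,z):=\bigl\{\,w\in{\mathcal H}:\ \|w\|\le 1,\ \ \langle\xi,w\rangle\le-\mu\ \text{ for all }i\text{ and all }\xi\in\partial^C f_i(\tau,\cdot)(z)\,\bigr\}.
\]
By Assumption~\ref{a4} this convex, closed, bounded set is nonempty whenever $z\in C(\tau)$ (it contains the unit vector $v(\tau,z)$), so a curve of this kind can be produced by the standard theory of differential inclusions (with values in the fixed, weakly compact unit ball, the relevant infinite-dimensional existence is of the same nature as in the results quoted in Section~2). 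Granting it, $\|x(t)-x\|\le\int_s^t\|\dot x(\tau)\|\,d\tau\le\frac{L_1}{\mu}(t-s)$ and $x(t)\in C(t)$, whence $d(x,C(t))\le\frac{L_1}{\mu}(t-s)$. The reason the curve stays inside $C(\cdot)$ is the pointwise estimate
\[
\frac{d}{d\tau}\,f_i\bigl(\tau,x(\tau)\bigr)\le L_1-L_1=0,
\]
valid for almost every $\tau$ and every $i$: the first $L_1$ bounds the drift of $f_i$ in the parameter (Assumption~\ref{a1}), while the $-L_1$ comes from $\langle\xi,\dot x(\tau)\rangle=\frac{L_1}{\mu}\langle\xi,w(\tau)\rangle\le-L_1$ (Assumption~\ref{a4}); the passage from $x(\tau)$ to the nearby points that arise when one differentiates $\tau\mapsto f_i(\tau,x(\tau))$ (via a non-smooth chain rule / Lebourg's mean value theorem) is licensed by the local Lipschitzness of Assumption~\ref{a2} together with the upper semicontinuity of the Clarke directional derivative of $f_i(\tau,\cdot)$. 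Consequently $\tau\mapsto f_i(\tau,x(\tau))$ is non-increasing, hence $\le f_i(s,x)\le0$, so indeed $x(\tau)\in C(\tau)$ throughout. Note that this uses only Assumptions~\ref{a1}, \ref{a2} and \ref{a4} — consistently with the modulus $L_1/\mu$ involving only $L_1$ and $\mu$ — and not the hypomonotonicity Assumption~\ref{a3}, which is what the prox-regularity Lemma~\ref{Lem1} needs.

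The step I expect to be the main obstacle is making the tracking curve rigorous: one has to produce an absolutely continuous solution of the differential inclusion $\dot x\in\frac{L_1}{\mu}W(\tau,x)$ that genuinely remains in the graph $\Gamma:=\{(\tau,z):z\in C(\tau)\}$. This requires checking that $(\tau,z)\mapsto W(\tau,z)$ is measurable in $\tau$ and, in $z$, upper semicontinuous with nonempty convex weakly compact values — inherited from the corresponding properties of the Clarke subdifferentials of the locally Lipschitz maps $f_i(\tau,\cdot)$ under Assumptions~\ref{a1}--\ref{a2} — that $\Gamma$ is closed (again from Assumptions~\ref{a1}--\ref{a2}), and that the derivative estimate above, read as a tangency condition, forces the velocity $\bigl(1,\tfrac{L_1}{\mu}w(\tau)\bigr)$ to point into $\Gamma$, so that a viability theorem applies; since $\Gamma$ may be unbounded and is not convex, and since the non-smooth chain rule behind the estimate must be justified carefully, this is where the real work lies. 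Everything else — the reduction to the Hausdorff distance and the short arithmetic $L_1-L_1=0$, which is precisely where the factor $L_1/\mu$ originates — is routine.
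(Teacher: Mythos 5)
Your overall strategy is sound and your arithmetic ($\langle\xi,\dot x\rangle\le\frac{L_1}{\mu}(-\mu)=-L_1$, cancelling the drift $L_1$ from \ref{a1}) identifies the right source of the modulus $L_1/\mu$. But the proof has a genuine gap, and it is exactly the step you yourself flag and then simply ``grant'': the existence of an absolutely continuous curve solving $\dot x(\tau)\in\frac{L_1}{\mu}W(\tau,x(\tau))$ that remains in the moving set $C(\tau)$. Nothing in the paper, and nothing off-the-shelf, delivers this: Theorems~\ref{sol_existence_0} and \ref{sol_existence} cannot be invoked, since they presuppose precisely the absolutely continuous variation of $C(\cdot)$ that Lemma~\ref{Lem2} is meant to establish (the argument would be circular), and general viability theorems for u.s.c.\ convex-valued inclusions in an infinite-dimensional Hilbert space with a nonconvex, time-dependent viability set $\Gamma$ require compactness or tangency hypotheses you have not verified. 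Even the invariance estimate itself is shaky as written: \ref{a4} provides the inequality $\langle\xi,w\rangle\le-\mu$ only at points of $C(\tau)$, while differentiating $\tau\mapsto f_i(\tau,x(\tau))$ (or applying Lebourg's theorem to an increment) involves subgradients at nearby points possibly outside $C(\tau)$, and passing from an a.e.\ derivative bound to monotonicity needs this composite map to be absolutely continuous, which requires a local Lipschitz constant for $f_i(\tau,\cdot)$ that is uniform in $\tau$ — something \ref{a2} does not give. So the core of the lemma remains unproved in your proposal.

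For comparison, the paper avoids dynamics altogether: it fixes $x\in C(t)$, takes the single displaced point $y=x+\vartheta|s-t|\,\overline v$ with $\overline v=v(t,x)$ from \ref{a4}, and shows $f_i(s,y)\le(L_1-\vartheta\mu)|s-t|\le0$ directly, by splitting $f_i(s,y)=[f_i(s,y)-f_i(t,y)]+f_i(t,x)+[f_i(t,y)-f_i(t,x)]$ and estimating the last bracket with Lebourg's mean value theorem along the segment $[x,y]$; a preliminary subdivision of $[0,T]$ with mesh less than $\vartheta^{-1}\rho$ guarantees that this segment lies in $U_\rho(C(t))$, where \ref{a2} applies. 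This one-step, static argument yields $d(x,C(s))\le\vartheta|s-t|$ (and by symmetry the Hausdorff bound) without any differential-inclusion or viability machinery, which is why the paper's proof is short where yours defers the real work. If you want to salvage your route, you would have to either prove the viability statement in this generality or, more economically, replace the tracking curve by the paper's single displacement.
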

\begin{proof}
	Fix a real number $\vartheta$ such that $\vartheta \geq \mu^{-1}L_1$. Choose a subdivision $$T_0=0<T_1<\ldots<T_p=T$$ of $[0,T]$ such that $T_k-T_{k-1}<\vartheta^{-1}\rho$ for $k=1,\dots,p$. Fix an index $k\in \{1,\dots,p\}$ and select any numbers $s,t$ from the segment $I_k:=[T_{k-1}, T_k]$. Put $u(s,t)=\vartheta \vert s-t\vert$. For any $x\in C(t)$, define $y=x+u(s,t)\overline{v}$. Since $t,s\in I_k$, we have 
	$\Vert y-x\Vert =\vartheta\vert s-t\vert <  \rho .$ This proves that $y\in {\rm int}(U_\rho(C(t)))$. By \cite[Lemma 3.2]{ANT}, for all $\lambda\in [0,1]$ we have $x+\lambda(y-x)\in {\rm int}(U_\rho(C(t))).$
	Take any $i\in \{1,\ldots, m\}$. By Assumption \ref{a2} and Lebourg's  mean value theorem (see, e.g., \cite[Theorem 2.3.7, p. 41]{Clarke}) there exists  $\lambda\in(0,1)$ such that
	\begin{equation*}
	f_i(t,y)-f_i(t,x)\in\langle \partial_2^C f_i(t,x(\lambda)), u(s,t)\overline{v}\rangle
	\end{equation*} with $x(\lambda):=(1-\lambda)x+\lambda y$.
	Hence, by Assumptions \ref{a1} and \ref{a4} we have
	\begin{equation*}
	\begin{aligned}
	f_i(s,y)&=[f_i(s, y)-f_i(t, y)]+f_i(t,x)+[f_i(t,y)-f_i(t,x)]\\
	&\leq L_1\vert s-t\vert -u(s,t)\mu\\
	&=\left(L_1-\vartheta\mu \right)\vert s-t\vert.
	\end{aligned}
	\end{equation*} Hence, $f_i(s,y)\leq 0$. Since $i\in\{1,\ldots, m\}$ can be chosen arbitrarily, we have thus shown that the vector $y=x+\vartheta\vert s-t\vert \overline{v}$ belongs to $C(s)$. So, $d(x,C(s)) \leq \vartheta\vert s-t \vert$ for every $x\in C(t)$. By symmetry, we get $d(x^\prime,C(t))\leq \vartheta\vert s-t \vert$ for every $x^\prime\in C(s)$. Consequently, we obtain $d_H(C(t),C(s)) \leq \vartheta\vert t - s \vert.$
	
	The proof is complete. \end{proof}

	\begin{theorem}\label{theorem_main0}
		Suppose that Assumptions {\rm \ref{a1}--\ref{a4}} are fulfilled. Let $g: I \times {\mathcal H}\to {\mathcal H}$ satisfy the three requirements {\rm (a)}, {\rm (b)} and {\rm (c)} in Theorem~\ref{sol_existence_0}. Then, for any $x_0\in C(0)$, the sweeping process
		\begin{equation}\label{mainproblem}
		\begin{cases}
		-\dot{x}(t)\in {\mathcal{N}}_{C(t)}(x(t)) + g(t, x(t)) \quad \text{a.e.}\ t\in I\\
		x(0)=x_0
		\end{cases}
		\end{equation}
		has at least one absolutely continuous solution $x(\cdot)$.
\end{theorem}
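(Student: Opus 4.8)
The plan is to obtain Theorem~\ref{theorem_main0} as a direct consequence of Theorem~\ref{sol_existence_0}. First I would recast the single-valued perturbation $g$ as the set-valued map $G:I\times{\mathcal H}\rightrightarrows{\mathcal H}$, $G(t,x):=\{g(t,x)\}$; its values are singletons, hence nonempty, convex, and compact, and by hypothesis $G$ fulfils conditions (a), (b), (c) of Theorem~\ref{sol_existence_0}. With this identification the differential inclusion in~\eqref{ettheorem_0} is literally the one in~\eqref{mainproblem}, so the whole matter reduces to verifying that the family $\{C(t)\}_{t\in I}$ satisfies Assumptions~\ref{h1} and~\ref{h2}.

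Assumption~\ref{h1} is exactly Lemma~\ref{Lem1}: under Assumptions~\ref{a1}--\ref{a4} each $C(t)$ is $r$-prox-regular with the common constant $r=\min\{\rho,\mu/\gamma\}>0$ (here $\rho>0$, $\mu>0$, $\gamma>0$). For Assumption~\ref{h2}, Lemma~\ref{Lem2} provides a constant $\vartheta\ge L_1/\mu$ with $d_H(C(t),C(s))\le\vartheta\,|t-s|$ for all $s,t\in I$. I then invoke the elementary fact that the distance function to a set is $1$-Lipschitz with respect to the Hausdorff distance: for $y\in{\mathcal H}$ and $z\in C(s)$ one has $d(y,C(t))\le\|y-z\|+d(z,C(t))\le\|y-z\|+d_H(C(s),C(t))$; taking the infimum over $z\in C(s)$ and then exchanging the roles of $s$ and $t$ gives
\[
\bigl|d(y,C(t))-d(y,C(s))\bigr|\le d_H(C(s),C(t))\le\vartheta\,|s-t|.
\]
Therefore the function $v:I\to\mathbb{R}$ defined by $v(t):=\vartheta\,t$ is Lipschitz, hence absolutely continuous, and it witnesses Assumption~\ref{h2}.

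Having checked Assumptions~\ref{h1} and~\ref{h2} for $\{C(t)\}$ and conditions (a)--(c) for $G$ (using, when $g\not\equiv0$, that ${\mathcal H}$ is separable, as required in Theorem~\ref{sol_existence_0}), an application of Theorem~\ref{sol_existence_0} yields, for every $x_0\in C(0)$, an absolutely continuous solution of~\eqref{ettheorem_0}, which is precisely~\eqref{mainproblem}. I do not expect a serious obstacle in this argument: the two substantive facts — prox-regularity of the sublevel sets $C(t)$ and their Lipschitz dependence on $t$ — are already established in Lemmas~\ref{Lem1} and~\ref{Lem2}. The only genuinely new (and minimal) step is converting the Hausdorff--Lipschitz continuity of $C(\cdot)$ into the exact form demanded by Assumption~\ref{h2}, the one point worth noting being simply that a Lipschitz scalar function is absolutely continuous, so $v(t)=\vartheta t$ is an admissible gauge.
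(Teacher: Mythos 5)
Your proposal is correct and coincides with the paper's own argument: both verify Assumption \ref{h1} via Lemma \ref{Lem1}, deduce Assumption \ref{h2} from the Hausdorff--Lipschitz estimate of Lemma \ref{Lem2} together with the $1$-Lipschitz dependence of $d(y,\cdot)$ on the set (taking $v(t)=\vartheta t$), and then apply Theorem \ref{sol_existence_0}. The only cosmetic difference is your explicit identification $G(t,x)=\{g(t,x)\}$, which the paper leaves implicit.
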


\begin{proof} By Lemma \ref{Lem1}, the set $C(t)$ is $r$-prox-regular for all $t\in[0,T]$. Moreover, Lemma \ref{Lem2} states that 
	$$d_H(C(t),C(s)) \leq \vartheta\vert t - s \vert.$$
	For all $y\in{\mathcal H}$, we have that $\Vert d(y,C(t)) - d(y,C(s)) \Vert \leq d_H(C(t),C(s))$. It follows that $C(t)$ varies in an absolutely continuous way, i.e.,
	$$\Vert d(y,C(t)) - d(y,C(s)) \Vert \leq \vert v(s) - v(t)\vert,$$
	where $v: I\to \mathbb{R}$, $v(z) = \vartheta z$. By Theorem~\ref{sol_existence_0}, we obtain the desired result.
\end{proof}

\begin{theorem}\label{theorem_main1}
	Suppose that Assumptions {\rm \ref{a1}--\ref{a4}} are fulfilled. Let $g: I \times {\mathcal H}\to {\mathcal H}$ be such a separately measurable map on $I$ that satisfies the two requirements~{\rm (i)} and {\rm (ii)} in Theorem~\ref{sol_existence}. Then, for any $x_0\in C(0)$, the sweeping process \eqref{mainproblem} has a unique absolutely continuous solution $x(\cdot)$.
\end{theorem}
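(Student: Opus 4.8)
The plan is to reduce Theorem~\ref{theorem_main1} to Theorem~\ref{sol_existence} by checking that Assumptions~\ref{a1}--\ref{a4} force the family $\{C(t)\}_{t\in I}$ to satisfy the Edmond--Thibault hypotheses~\ref{h1} and~\ref{h2}. This mirrors the argument used for Theorem~\ref{theorem_main0}; the only difference is that the multi-valued existence statement Theorem~\ref{sol_existence_0} is replaced by the single-valued existence-and-uniqueness statement Theorem~\ref{sol_existence}.

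First I would invoke Lemma~\ref{Lem1}: under~\ref{a1}--\ref{a4} each set $C(t)$ is $r$-prox-regular with $r=\min\{\rho,\mu/\gamma\}>0$, which is exactly Assumption~\ref{h1}. Next I would invoke Lemma~\ref{Lem2} to obtain a modulus $\vartheta\geq L_1/\mu$ for which $d_H(C(t),C(s))\leq\vartheta|t-s|$ for all $s,t\in I$. Since for every $y\in{\mathcal H}$ one has $|d(y,C(t))-d(y,C(s))|\leq d_H(C(t),C(s))\leq\vartheta|t-s|$, the scalar function $v(z):=\vartheta z$ is Lipschitz, hence absolutely continuous, and it witnesses Assumption~\ref{h2}. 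At this stage the constraint family $\{C(t)\}$ has been shown to meet~\ref{h1} and~\ref{h2}, while $g$ satisfies conditions~(i) and~(ii) of Theorem~\ref{sol_existence} by hypothesis.

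I would then apply Theorem~\ref{sol_existence} verbatim to the family $\{C(t)\}$ and the perturbation $g$: for any $x_0\in C(0)$ this yields a unique absolutely continuous solution $x(\cdot)$ of~\eqref{mainproblem}, together with the a.e.\ estimate $\|\dot x(t)+g(t,x(t))\|\leq(1+M_{x_0})\beta(t)+|\dot v(t)|$ in which $\dot v\equiv\vartheta$. That completes the argument.

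I do not anticipate any real obstacle here: the prox-regularity and the Hausdorff-Lipschitz continuity of $t\mapsto C(t)$ — the genuine content — are already packaged in Lemmas~\ref{Lem1} and~\ref{Lem2}, and the existence/uniqueness machinery is packaged in Theorem~\ref{sol_existence}. The only points deserving a sentence of care are that~\ref{h2} demands an \emph{absolutely continuous} $v$, which the linear choice $v(z)=\vartheta z$ comfortably supplies, and that the freedom in choosing $\vartheta$ (any number $\geq L_1/\mu$) affects only the constant in the velocity estimate, not existence or uniqueness. Accordingly I would keep the proof to a few lines, parallel to the proof of Theorem~\ref{theorem_main0}.
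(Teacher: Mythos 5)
Your proposal is correct and follows exactly the paper's route: the paper proves Theorem~\ref{theorem_main1} by arguing as in Theorem~\ref{theorem_main0} (Lemma~\ref{Lem1} gives \ref{h1}, Lemma~\ref{Lem2} with $v(z)=\vartheta z$ gives \ref{h2}), only replacing Theorem~\ref{sol_existence_0} by Theorem~\ref{sol_existence}. No discrepancies to report.
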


\begin{proof} Using Theorem~\ref{sol_existence} instead of Theorem~\ref{sol_existence_0} and arguing similarly as in the proof of Theorem~\ref{theorem_main0}, one can obtain the desired result.	
\end{proof}

\begin{remark}\label{independence_on_x0}
	{\rm The assumptions \ref{a1}--\ref{a4} on the functions $f_i$, $i\in\{1,\dots,m\}$, and the family of sets $C(t)$, $t\in I$, do not depend on the choice of $x_0$ from $C(t)$. Clearly, the requirements {\rm (i)} and {\rm (ii)} on $g(t,x)$ in the formulation of Theorem~\ref{theorem_main1} also do not depend on the choice of $x_0$ from $C(t)$. }
\end{remark}

\section{Applications to Mechanical Sweeping Processes}
To illustrate the applicability of Theorem~\ref{theorem_main1}, we shall provide examples considering classical mechanical models.

\begin{example}\label{ex1}\rm
	Consider the problem~\eqref{mainproblem} with ${\mathcal H}=\mathbb{R}^2$, $m=1$, $f_1(t,x) =t-x_2+\vert x_1\vert$, and $g(t,x) = 0$ for all $t\in [0,T]$, $x=(x_1,x_2)\in\mathbb{R}^2$. Here, we have 
	\begin{equation}\label{levelset}
	C(t) = \{x\in \mathbb{R}^2\mid -x_2+\vert x_1\vert \leq -t\}.
	\end{equation} Let the initial condition be $x(0) = (0,0)$. Obviously, $x(0) \in C(0)$ and $f=f_1$ satisfies Assumptions~\ref{a1} and \ref{a2}. We have
	\begin{equation}\label{subdifferential_ex1}
	\partial^Cf_1(t,\cdot)(x)=\begin{cases}
	\{(1,-1)\} \qquad \quad&\text{if} \; x_1 >0\\
	[-1,1]\times \{-1\}&\text{if}\; x_1=0\\
	\{(-1,-1)\} &\text{if} \; x_1 <0.
	\end{cases}
	\end{equation}
	Since $f_1(t,\cdot)$ is convex, $\partial^C f_1(t,\cdot)$ coincides with the convex subdifferential mapping of $\partial f_1(t,\cdot)$, which is monotone. Hence, for any $t\in[0,T]$, the mapping $\partial^C f_1(t,\cdot)$ is hypermonotone with any $\gamma>0$. Thus, Assumption \ref{a3} is satisfied. Now, to check Assumption~\ref{a4}, let us fix any $\mu \in (0,1]$. Suppose that $t\in[0,T]$ and $x\in C(t)$ are given arbitrarily. For $\overline{v}:=(0,1)$, one has $\langle \xi,\overline{v} \rangle=\xi_2$, where $\xi=(\xi_1,\xi_2) \in \partial^Cf_1(t,\cdot)(x)$ can be chosen arbitrarily. Thanks to~\eqref{subdifferential_ex1}, we have $\xi_2=-1$. Hence, $$\langle \xi,\overline{v} \rangle=-1\leq -\mu.$$ 
	We have thus showed that Assumption~\ref{a4} is satisfied. Since $g(t,x)\equiv 0$, the requirements (i) and (ii) on $g$ are fulfilled. So, according to Theorem \ref{theorem_main1}, \eqref{mainproblem} has a unique absolutely continuous solution $x(\cdot)$. Interestingly, we can give an explicit formula for~$x(\cdot)$. Namely, let us show that
	\begin{equation}\label{examplesol}
	x_1(t) = 0, \; x_2(t)=t \quad \forall t\in [0,T].
	\end{equation}
	Clearly, the trajectory $x(t)$ given by \eqref{examplesol} satisfies the conditions $$x(0) = (0,0)\quad  {\rm and}\quad  -\dot{x}(t) = (0,-1).$$ Since $C(t)$ is convex, the Clarke normal cone to $C(t)$ at any point of $C(t)$ coincides with the normal cone to $C(t)$ at that point in the sense of convex analysis (see \cite[Proposition 2.4.4]{Clarke}). So, applying~\cite[Proposition 2, p. 206]{IT} to the set $C(t)$ in~\eqref{levelset}, which is a sublevel set of the continuous convex function $f_1(t,\cdot)$, at the boundary $x(t)=(x_1(t), x_2(t))$, one obtains ${\mathcal N}_{C(t)}(x(t))= \mathbb{R}_+\partial f_1(t,\cdot)(x(t)).$ Since $x_1(t) \equiv 0$, combining this with \eqref{subdifferential_ex1} gives $${\mathcal N}_{C(t)}(x(t))= \mathbb{R}_+([-1,1]\times \{-1\}).$$ So, $-\dot{x}(t) \in {\mathcal N}_{C(t)}(x(t))+g(t,x(t))$ for all $t \in [0,T]$. Hence, formula \eqref{examplesol} describes the unique absolutely continuous solution of the problem in question. The above mathematical model and the solution have the following clear mechanical meanings. In the horizontal coordinate plane $\mathbb{R}^2$, there is a small metal ball standing at the origin of the plane at time $t=0$. The boundary of $C(0)$ is the union of two orthogonal half-lines. Suppose that the boundary is the frame made from two long sticks of bamboo or wood which are firm enough that they cannot be bend by the metal ball. The set $C(t)$ in~\eqref{levelset} is the position of $C(0)$ at the time $t$. The requirement saying that the ball must be inside $C(t)$ at any time $t$ means that it must be in the plane area formed by the frame. The change of $C(t)$ with respect to $t$ corresponds to the movement of the frame along the $x_2$-axis with the velocity $1$. The assumption $g(t,x) \equiv 0$ means that there is no external force acting on the ball. The formula~\eqref{examplesol} of the obtained solution means that the ball always lies in the corner of the frame, when the later moves steadily along the $x_2$-axis.	
\end{example}

Concerning the sweeping problem in Example~\ref{ex1}, we observe that the role of the normal cone operator ${\mathcal{N}}_{C(t)}(x(t))$ in the inclusion $-\dot{x}(t)\in {\mathcal{N}}_{C(t)}(x(t)) + g(t, x(t))$ is important. Namely, note that the last inclusion implies $x(t) \in C(t)$. Note also that $0\in {\mathcal{N}}_{C(t)}(x(t))$ if $x(t)\in C(t)$. So, together with~\eqref{mainproblem}, it is naturally to consider the following tighter problem:
\begin{equation*}
\begin{cases}
-\dot{x}(t) = g(t, x(t))  \quad \text{a.e.} \ t\in I\\
x(t)\in C(t)\quad \text{for} \ t\in I\\
x(0)=0
\end{cases}
\end{equation*}
Since $g(t,x)\equiv 0$, the first and the third conditions of this system imply that $x(t) = 0$ for all $t\in I$. However, for this curve $x(t)$, the second condition of the system is violated. So, \textit{the assertion of Theorem~\ref{theorem_main1} may fail to hold if one replaces the inclusion $-\dot{x}(t)\in {\mathcal{N}}_{C(t)}(x(t)) + g(t, x(t))$ by the conditions $-\dot{x}(t) = g(t, x(t))$ and $x(t)\in C(t)$.}

\begin{example}\label{ex2}\rm
	Consider problem \eqref{mainproblem} with the data given in Example~\ref{ex1}, where the initial point is $x(0) = x_0$ with $x_0=(x_1^0,x_2^0)$ being an arbitrary point from $C(0)$. The analysis in Example~\ref{ex1} shows that the assumptions \ref{a1}--\ref{a4} and the requirements~(i) and (ii) on $g(t,x)$ in the formulation of Theorem~\ref{theorem_main1} are satisfied. Hence, by Remark~\ref{independence_on_x0} and Theorem~\ref{theorem_main1}, the sweeping process~\eqref{mainproblem} has a unique absolutely continuous solution $x(\cdot)$. To have an explicit formula for this solution $x(\cdot)$, we first suppose that $x_0$ belongs to the interior of $C(0)$. This means that $\vert x^0_1\vert < x^0_2$. Put $\overline{t}_{x_0} = x^0_2-\vert x^0_1\vert$ and note that $\overline{t}_{x_0} > 0$.
	
	\textit{Case 1}: $T \leq \overline{t}_{x_0}$. In this case, since $f_1(t,x_0) = t-x_2^0+\vert x_1^0 \vert= t- \overline{t}_{x_0}<0$ for all $t\in [0,T)$, one has $x_0 \in {\rm int}(C(t))$ for all $t\in [0,T)$. So, setting $x(t) = x_0$ for $t\in I$, we obtain $${\mathcal N}_{C(t)}(x(t)) = \{(0,0)\}$$ for all $t\in [0,T).$ Therefore, \eqref{mainproblem} is satisfied. Since the solution is unique by Theorem~\ref{theorem_main1}, the just defined constant trajectory is the unique absolutely continuous solution of the sweeping process under our consideration.
	
	\textit{Case 2}: $\overline{t}_{x_0}<T$. First, consider the subcase where $\overline{t}_{x_0} \leq 2\vert x_1^0\vert+\overline{t}_{x_0} < T$. Let us prove that the unique solution $x(\cdot)$ can be given by the formula
	\begin{equation}\label{case2_ex2}
	x(t)=\begin{cases}
	x_0\qquad \quad&\text{if} \; t\in[0,\overline{t}_{x_0})\\
	(x_1^0-{\rm sign}(x_1^0)\frac{t-\overline{t}_{x_0}}{2},x_2^0+\frac{t-\overline{t}_{x_0}}{2})&\text{if}\; t\in [\overline{t}_{x_0},2\vert x_1^0\vert+\overline{t}_{x_0})\\
	(0,t) &\text{if} \; t\in[2\vert x_1^0\vert+\overline{t}_{x_0}, T].
	\end{cases}
	\end{equation}
	Note that the function $x(\cdot)$ is absolutely continuous on $[0,T]$ and $x(0)=x_0$. Arguing as in Case~1, we obtain $-\dot{x}(t)\in {\mathcal{N}}_{C(t)}(x(t)) + g(t, x(t))$ for every $t\in [0,\overline{t}_{x_0})$. For $t\in [\overline{t}_{x_0},2\vert x_1^0\vert +\overline{t}_{x_0})$, if $x_1^0 \leq 0$ then $ t < -2x_1^0+\overline{t}_{x_0}$. Hence, $x_1(t)=x_1^0+\frac{t-\overline{t}_{x_0}}{2} <0.$ Combining this with~\eqref{case2_ex2} yields
	$$f_1(t,x(t))=t-x_2(t)+\vert x_1(t)\vert =t-\left(x_2^0+\frac{t-\overline{t}_{x_0}}{2}\right)-\left(x_1^0+\frac{t-\overline{t}_{x_0}}{2}\right)=0.$$
	This means that $x(t)\in \partial C(t)$, $x_1(t) < 0$; so $\partial^Cf_1(t,x(t)) = \{(-1,-1)\}$. Thanks to the continuity and convexity of the function $f_1(t,\cdot)$, applying~\cite[Proposition 2.4.4]{Clarke} we have ${\mathcal N}_{C(t)}(x(t))=\mathbb{R}_+ \{(-1,-1)\}.$ It follows that $\dot{x}(t) = (\frac{1}{2},\frac{1}{2})\in -{\mathcal N}_{C(t)}(x(t))$ for all $t\in (\overline{t}_{x_0},-2x_1^0+\overline{t}_{x_0})$. The situation $x_1^0 > 0$ can be treated similarly. Therefore, $-\dot{x}(t)\in {\mathcal{N}}_{C(t)}(x(t)) + g(t, x(t))$ for $t\in (\overline{t}_{x_0},2\vert x_1^0\vert+\overline{t}_{x_0})$. Now, for $t\in[2\vert x_1^0 \vert+\overline{t}_{x_0}, T]$, one has $-\dot{x}(t)\in {\mathcal{N}}_{C(t)}(x(t)) + g(t, x(t))$ by~\eqref{case2_ex2} and the result given in Example~\ref{ex1}.  Therefore, \eqref{case2_ex2} describes the unique absolutely continuous solution $x(\cdot)$ of the problem in question. In the situation where $\overline{t}_{x_0} < T\leq 2\vert x_1^0\vert+\overline{t}_{x_0}$, arguing analogously as before, we can show that the formula 
	\begin{equation}\label{case3_ex2}
	x(t)=\begin{cases}
	x_0\qquad \quad&\text{if} \; t\in[0,\overline{t}_{x_0})\\
	(x_1^0-{\rm sign}(x_1^0)\frac{t-\overline{t}_{x_0}}{2},x_2^0+\frac{t-\overline{t}_{x_0}}{2})&\text{if}\; t\in [\overline{t}_{x_0},T]
	\end{cases}
	\end{equation}
	describes the unique absolutely continuous solution $x(\cdot)$ of our problem.
	
	Now, suppose that $x(0) \in \partial C(0)$. This means that $x_2^0-\vert x_1^0\vert =0$. This situation reduces to Case 2 above with $\overline{t}_{x_0}:=x_2^0-\vert x_1^0\vert=0$. So, the unique absolutely continuous solution $x(\cdot)$ of our problem is given by
	\begin{equation}\label{case4_ex2}
	x(t)=\begin{cases}
	(x_1^0-{\rm sign}(x_1^0)\frac{t}{2},x_2^0+\frac{t}{2})&\text{if}\; t\in [0,2\vert x_1^0\vert)\\
	(0,t) &\text{if} \; t\in[2\vert x_1^0\vert, T]
	\end{cases}
	\end{equation}
	whenever $2\vert x_1^0\vert<T$, and
	\begin{equation}\label{case5_ex2}
	x(t)=\left(x_1^0-{\rm sign}(x_1^0)\frac{t}{2},x_2^0+\frac{t}{2}\right) \ \; \text{for} \;t\in[0,T]
	\end{equation}
	whenever $2\vert x_1^0\vert\geq T$. As in the preceding example, the problem here and the obtained solution can be interpreted respectively as a mechanical problem and a mechanical motion as follows. Suppose that,  at time $t=0$, there is a small metal ball standing at the point $x_0\in C(0)$ in the horizontal plane $\mathbb{R}^2$. When the set $C(0)$ moves along the $x_2$-axis with the velocity $1$ (see~\eqref{levelset}), its boundary - a firm frame consisting of two orthogonal half-lines - also moves along the $x_2$-axis with the velocity $1$. The ball cannot overpass the frame. If $x_0\in{\rm int}(C(0))$, $x_1(0)\neq 0$, and $2\vert x_1^0\vert+\overline{t}_{x_0} < T$ with $\overline{t}_{x_0}:=x_2^0-\vert x_1^0\vert$, then~\eqref{case2_ex2} shows that the motion of the ball in the time segment $[0,T]$ and has three phases: (a)~Until the time instant $\overline{t}_{x_0}$, the ball stays still; (b)~In the time interval $[\overline{t}_{x_0},2\vert x_1^0\vert+\overline{t}_{x_0})$, the ball goes steadily along one wing of the boundary of $C(0)$ with the speed $\frac{\sqrt{2}}{2}$ (the ball is on the left wing if $x_1(0)< 0$ and it is on the right wing if $x_1(0)>0$); (c)~In the time interval $[2\vert x_1^0\vert+\overline{t}_{x_0}, T]$, the ball always lies in the corner of the above-mentioned frame. Similar interpretations can be given for formulas~\eqref{case3_ex2}--\eqref{case5_ex2}.
\end{example}

Let the horizontal plane $\mathbb{R}^2$ in the preceding example be replaced by a \textit{vertical} plane $\mathbb{R}^2$, where the $x_2$-axis is orthogonal to the earth surface and pointing up. Then, the set $C(t)$ given by~\eqref{levelset} can be interpreted as the position of the set $$C(0) = \{x\in \mathbb{R}^2\mid -x_2+\vert x_1\vert \leq 0\}$$ at time $t$. In other words, in accordance with formula~\eqref{levelset}, the set $C(0)$ is moving up along the $x_2$-axis with the velocity $1$. As before, the boundary of $C(0)$ - a firm frame - also moves along the $x_2$-axis with the velocity $1$. Note that the metal ball in question cannot overpass the frame. Since the ball has the tendency to go down  straightly with the acceleration $g_0=9.8$, the velocity of its free fall is $-g_0t$. So the equation of motion of the ball should be $-\dot{x}(t)\in {\mathcal{N}}_{C(t)}(x(t)) + g(t, x(t))$ for almost everywhere $t\in I$, where $g(t,x):=(0,g_0t)$. The solution of this mechanical problem is given below. 

\begin{example}\label{ex3}\rm
	Consider problem \eqref{mainproblem} with the data given in Example~\ref{ex1} except for $g(t,x)=(0,g_0t)$, where $g_0=9.8$ is the gravitational acceleration. Let the initial condition be $x(0) = (x_1^0,x_2^0)$. As we know from the two examples above, the assumption~\ref{a1}--\ref{a4} hold true. Since $g(t,x)$ is independent of the second variable, it is clear that the requirement (i) in Theorem~\ref{theorem_main1} is satisfied. In addition, as $g(t,x)$ is a linear function of $t$, the requirement (ii) in the theorem is satisfied with the choice $\beta(t) =g_0t$. Hence, by Remark~\ref{independence_on_x0} and Theorem~\ref{sol_existence}, the sweeping process~\eqref{mainproblem} has a unique absolutely continuous solution $x(\cdot)$. To provide an explicit formula for this solution $x(\cdot)$, we first consider the situation where $x_0\in{\rm int}(C(0))$. Putting $\overline{t}_{x_0} = x^0_2-\vert x^0_1\vert$, one has $\overline{t}_{x_0} > 0$. Define $\theta_{x_0}^1=\frac{-1+\sqrt{1+2g_0\overline{t}_{x_0}}}{g_0}$ and $\theta_{x_0}^2=\frac{-1+\sqrt{1+2g_0(\overline{t}_{x_0}+2\vert x_1^0\vert)}}{g_0}$. It is clear that $0<\theta_{x_0}^1\leq\theta_{x_0}^2$.
	
	\textit{Case 1}: $T\leq \theta_{x_0}^1$. Setting 
	\begin{equation}\label{case1_ex3}
	x(t)=(x_1^0,x_2^0-\frac{g_0t^2}{2}) \quad (\forall t\in I),
	\end{equation}
	we have $f_1(t,x(t)) = t- x_2^0+\frac{g_0t^2}{2}+\vert x_1^0 \vert<0,$ for any $t\in [0,T)$. Hence, $x(t) \in {\rm int}(C(t))$ for all $t\in [0,T)$. So, ${\mathcal N}_{C(t)}(x(t))=\{(0,0)\}$ for all $t\in [0,T)$. Since $-\dot{x}(t)= (0,g_0t)$, it follows that the inclusion in~\eqref{mainproblem} is satisfied for all $t\in [0,T)$. Therefore, Theorem~\ref{theorem_main1} assures that the chosen trajectory is the unique absolutely continuous solution of~\eqref{mainproblem}.
	
	\textit{Case 2}: $\theta_{x_0}^1 < T$. If $\theta_{x_0}^1 \leq \theta_{x_0}^2 < T.$ then the explicit formula for the solution $x(\cdot)$ is
	
	\begin{equation}\label{case2_ex3}
	x(t)=\begin{cases}
	(x_1^0,x_2^0-\frac{g_0t^2}{2})\qquad \quad&\text{if} \; t\in[0,\theta_{x_0}^1)\\
	(x_1^0-{\rm sign}(x_1^0)\left(\frac{t-\overline{t}_{x_0}}{2}+\frac{g_0t^2}{4}\right),x_2^0+\frac{t-\overline{t}_{x_0}}{2}-\frac{g_0t^2}{4})&\text{if}\; t\in [\theta_{x_0}^1,\theta_{x_0}^2)\\
	(0,t) &\text{if} \; t\in[\theta_{x_0}^2, T].
	\end{cases}
	\end{equation}
	Indeed, the function $x(\cdot)$ is an absolutely continuous on $[0,T]$, $x(0)=x_0$, and a direct verification shows that $-\dot{x}(t) \in {\mathcal N}_{C(t)}(x(t)) + (0,g_0t)$ for $t\in [0,\theta_{x_0}^1)$. Now, suppose that $x_1^0 \leq 0$. Then we have $x_1(t)=x_1^0+\frac{t-\overline{t}_{x_0}}{2}+\frac{g_0t^2}{4}<0$ for $t\in[\theta_{x_0}^1,\theta_{x_0}^2)$.  So, for $t\in[\theta_{x_0}^1,\theta_{x_0}^2)$, one has
	$$f_1(t,x(t))= t-x_2^0-\frac{t-\overline{t}_{x_0}}{2}+\frac{g_0t^2}{4}-x_1^0-\frac{t-\overline{t}_{x_0}}{2}-\frac{g_0t^2}{4}=0.$$
	Hence, $x(t) \in \partial C(t)$. Since $x_1(t)<0$, this implies that $\partial^Cf_1(t,x(t)) =\{-1,-1\}$ for every $t\in[\theta_{x_0}^1,\theta_{x_0}^2)$. Thanks to the continuity and convexity of $f_1(t,\cdot)$, applying~\cite[Proposition 2.4.4]{Clarke}, we obtain  ${\mathcal N}_{C(t)}(x(t))=\mathbb{R}_+ \{(-1,-1)\}$ for $t\in[\theta_{x_0}^1,\theta_{x_0}^2)$. Since 
	$$\dot{x}(t) = \left(\frac{1+g_0t}{2},\frac{1+g_0t}{2}-g_0t\right),$$
	one has $-\dot{x}(t) \in {\mathcal N}_{C(t)}(x(t)) +(0,g_0t)$ for $t\in (\theta_{x_0}^1,\theta_{x_0}^2)$. Thus, for every $t\in (\theta_{x_0}^1,\theta_{x_0}^2)$, the inclusion $-\dot{x}(t) \in {\mathcal N}_{C(t)}(x(t)) +g(t,x)$ holds. For $t\in(\theta_{x_0}^2, T)$, it is clear that $f_1(t,x(t))= 0$
	and $\dot{x}(t)=(0,1)$. Since ${\mathcal N}_{C(t)}(x(t))= \mathbb{R}_+([-1,1]\times \{-1\})$, the inclusion $$-\dot{x}(t) \in {\mathcal N}_{C(t)}(x(t))+ g(t,x)$$ holds for $t\in(\theta_{x_0}^2, T)$. Therefore, the function $x(\cdot)$ given in~\eqref{case2_ex3} describes the unique absolutely continuous solution of the problem under consideration. The situation $x_1^0 > 0$ can be treated similarly. If $T \leq \theta_{x_0}^2$, arguing analogously, we can prove that the formula 
	\begin{equation}\label{case3_ex3}
	x(t)=\begin{cases}
	(x_1^0,x_2^0-\frac{g_0t^2}{2})\qquad \quad&\text{if} \; t\in[0,\theta_{x_0}^1)\\
	(x_1^0-{\rm sign}(x_1^0)\left(\frac{t-\overline{t}_{x_0}}{2}+\frac{g_0t^2}{4}\right),x_2^0+\frac{t-\overline{t}_{x_0}}{2}-\frac{g_0t^2}{4})&\text{if}\; t\in [\theta_{x_0}^1,T]
	\end{cases}
	\end{equation}
	describes the unique solution $x(\cdot)$.
	
	Now, suppose that $x(0)\in \partial C(0)$. It is not difficult to show that the unique absolute solution $x(\cdot)$ is described as
	\begin{equation}\label{case4_ex3}
	x(t)=\begin{cases}
	(x_1^0-{\rm sign}(x_1^0)\left(\frac{t}{2}+\frac{g_0t^2}{4}\right),x_2^0+\frac{t}{2}-\frac{g_0t^2}{4})&\text{if}\; t\in [0,\theta_{x_0}^2),\\
	(0,t) &\text{if} \; t\in[\theta_{x_0}^2, T].
	\end{cases}
	\end{equation}
	if $\theta_{x_0}^2 < T$, and by the formula
	\begin{equation}\label{case5_ex3}
	x(t)= \left(x_1^0-{\rm sign}(x_1^0)\left(\frac{t}{2}+\frac{g_0t^2}{4}\right),x_2^0+\frac{t}{2}-\frac{g_0t^2}{4}\right)\quad\text{for}\; t\in [0,T].
	\end{equation}
	if $\theta_{x_0}^2 \geq T$. The mechanical meanings of the motion modes~\eqref{case1_ex3}--\eqref{case5_ex3}  of the metal ball are similar to those explained in Example~\ref{ex2}.  
\end{example}

\begin{remark}\label{reachability}
	{\rm By ${\mathcal R}_T$ we denote the set of end points of the sweeping process~\eqref{ettheorem}, i.e., the set of all $x(T)$ with $x(\cdot)$ being the unique solution of~\eqref{mainproblem} where $x_0\in C(0)$ is chosen arbitrarily. It is an interesting question that under which conditions on $C(t)$, $t\in[0,T]$, we have ${\mathcal R}_T=C(T)$. The following example shows that even when $C(t)$ is just a linear translation of $C(0)$, we get a negative answer. The system 
		\begin{equation}\label{reverseproblem}
		\begin{cases}
		-\dot{x}(t)\in {\mathcal{N}}_{C(t)}(x(t)) + g(t, x(t)) \quad \text{a.e.}\ t\in I,\\
		x(T)=x_1
		\end{cases}
		\end{equation}
		will be used in our analysis.}
\end{remark}
\begin{example}\rm
	Consider problem \eqref{mainproblem} with ${\mathcal H}=\mathbb{R}^2$, $m=2$, $$f_1(t,x) =t-x_2+\vert x_1\vert,\ \;f_2(t,x)= x_2-t-1,$$ and $g(t,x) = 0$ for all $t\in [0,T]$, where $T=3$, and $x=(x_1,x_2)\in\mathbb{R}^2$. Here, we have 
	\begin{equation}\label{ex4_levelset}
	C(t) = \{x\in \mathbb{R}^2\mid -x_2+\vert x_1\vert \leq -t,\ x_2\leq t+1\}.
	\end{equation} Let the terminal condition be $x(T) = (x_1^1,x_2^1)$. If~\eqref{reverseproblem} has a solution $x(\cdot)$, then one has $x(0)=x_0$ for some $x_0=(x_1^0,x_2^0)\in C(0)$. Since the assumptions \ref{a1}--\ref{a4} and the requirements~(i) and (ii) on $g(t,x)$ in the formulation of Theorem~\ref{theorem_main1} are satisfied, by Remark~\ref{independence_on_x0} and Theorem~\ref{theorem_main1}, the sweeping process~\eqref{mainproblem} with the chosen $x_0$ has a unique absolutely continuous solution. Using the formula of $C(t)$ in~\eqref{ex4_levelset}, one can easily show that $\vert x_1^0 \vert \leq 1$. For $\overline{t}_{x_0}:=x_2^0-\vert x_1^0\vert$, we have $2\vert x_1^0\vert+\overline{t}_{x_0} \leq 2<T$.  Arguing similarly to Example~\ref{ex2}, we can show that the unique absolutely continuous solution $x(\cdot)$ of~\eqref{mainproblem} is given by~\eqref{case2_ex2} if $x_2^0 = 1$ or if $x_0\in {\rm int}(C(0))$, and by~\eqref{case4_ex2} if $x_2^0 < 1$ and $x_0\in\partial C(0)$. In both cases, we have $x(T) = (0,3)$. So, the following assertions are valid: (i) If $x_1\neq (0,3)$, then problem~\eqref{reverseproblem} has no solution; (ii) If $x_1=(0,3)$, then~\eqref{reverseproblem} have infinite number of solutions; (iii) For any $x_0\in C(0)$, the unique solution $x(\cdot)$ of~\eqref{ettheorem} ends at the point $x(T)=(0,3)$.
\end{example}

However, if the sweeping process~\eqref{reverseproblem} in Remark~\ref{reachability} is subjected to multi-valued perturbations $g(t,x(t))$, then the above question can be considered as a controllability problem, for which we expect to have a positive solution. This issue will be addressed in our further work.

\section{Conclusions}
In this paper, the solution existence as well as the solution uniqueness for perturbed sweeping processes has been studied under the assumption of the prox-regularity of the constraint sets. 

If the perturbation function $g(t,x)$ is multi-valued, then we have deal with multi-valued perturbed sweeping processes in the prox-regular case. For these problems, it is of interest to establish some results on the solution existence, continuous dependence of the solutions, and the reachability of sweeping processes similar to the ones given in the present paper.

It is unclear to us whether one can relax the assumptions of Theorem~\ref{theorem_main0} in such a way that the solution existence of the problem~\eqref{mainproblem} is still guaranteed, or not.

\vskip 6mm
\noindent{\bf Acknowledgments}

\noindent  The authors are grateful to Professor Samir Adly for stimulating discussions on the subject. Nguyen Nang Thieu and Nguyen Dong Yen were supported by the project \textit{"Some qualitative properties of optimization problems and dynamical systems, and applications"} (Code: ICRTM01$\_$2020.08)  of the International Center for Research and Postgraduate Training in  Mathematics (ICRTM) under the auspices of UNESCO of Institute of Mathematics, Vietnam Academy of Science and Technology.

\end{document}